\documentclass[12pt,reqno]{amsart}
\usepackage[T1]{fontenc}
\usepackage{lmodern}
\usepackage{amsmath,amssymb,mathtools}
\usepackage[letterpaper,margin=1in]{geometry}
\usepackage{microtype}
\usepackage{enumitem}
\usepackage{needspace}
\usepackage{xcolor}
\usepackage[numbers,square,sort]{natbib}
\usepackage{hyperref}
\hypersetup{colorlinks=true,linkcolor=blue,citecolor=blue,urlcolor=black,pdfauthor={Lu Chen, Yan Jiang, Hongyu Liu, Longyue Tao},pdftitle={On an inverse boundary problem in elastodynamics},pdfsubject={Static isotropic elasticity; structured shear moduli; uniqueness}}
\numberwithin{equation}{section}
\newtheorem{theorem}{Theorem}[section]
\newtheorem{lemma}[theorem]{Lemma}
\newtheorem{proposition}[theorem]{Proposition}
\theoremstyle{definition}
\newtheorem{definition}{Definition}[section]
\theoremstyle{remark}
\newtheorem{remark}{Remark}[section]
\setlist[enumerate]{label=\textup{(\roman*)},leftmargin=2em,itemsep=0.35em,topsep=0.4em}
\allowdisplaybreaks[1]
\title[An inverse boundary problem in elastodynamics]{On an inverse boundary problem in elastodynamics}
\author{Lu Chen}
\address[Lu Chen]{Key Laboratory of Algebraic Lie Theory and Analysis, Ministry of Education, School of Mathematics and Statistics, Beijing Institute of Technology, Beijing 100081, PR China}
\email{chenlu5818804@163.com}
\author{Yan Jiang}
\address[Yan Jiang]{Department of Mathematics, City University of Hong Kong, Hong Kong SAR, China}
\email{yjian24@cityu.edu.hk}
\author{Hongyu Liu}
\address[Hongyu Liu]{Department of Mathematics, City University of Hong Kong, Hong Kong SAR, China}
\email{hongyu.liuip@gmail.com, hongyliu@cityu.edu.hk}
\author{Longyue Tao}
\address[Longyue Tao]{Department of Mathematics, City University of Hong Kong, Hong Kong SAR, China}
\email{sdyctly@163.com, longyue.tao@my.cityu.edu.hk}
\date{}
\keywords{Inverse boundary problem, static elasticity, Lam\'e parameters, axisymmetry, separation of variables, quasianalyticity}
\subjclass[2020]{35R30, 74B05}

\begin{document}
\begin{abstract}
We study the simultaneous identification of the Lam\'e parameters $\lambda$ and $\mu$ in three-dimensional static isotropic elasticity.
For smooth coefficients, we establish uniqueness from the full Dirichlet-to-Neumann map when $\mu$ is axisymmetric, admits a multiplicative or additive separation on a product domain, or is quasianalytic in one fixed direction.
These results require neither smallness of $\nabla\mu$ nor real analyticity of the coefficients, and impose no corresponding structural condition on $\lambda$.
In the axisymmetric and separated cases, partial boundary data suffice to determine $\mu$.
\end{abstract}
\maketitle

\section{Introduction}\label{sec:introduction}
\subsection{Problem formulation and background}\label{subsec:problem-background}
Let $\Omega\subset\mathbb{R}^3$ be a known bounded connected Lipschitz domain.
For a displacement $u:\Omega\to\mathbb{R}^3$, write
\[
 \varepsilon(u)=\frac12\bigl(\nabla u+(\nabla u)^T\bigr),
 \qquad
 \sigma_{\lambda,\mu}(u)=\lambda(\operatorname{div} u)I+2\mu\varepsilon(u),
\]
where $\lambda$ and $\mu$ are the \emph{Lam\'e parameters} and $\mu$ is the \emph{shear modulus}.
We assume that the coefficients are real valued and bounded, and satisfy the uniform strong convexity condition
\begin{equation}\label{eq:convexity}
 \mu\geq c_*,\qquad 3\lambda+2\mu\geq c_*
 \quad\hbox{almost everywhere in }\Omega
\end{equation}
for some $c_*>0$.
The \emph{elastic Dirichlet problem}
\begin{equation}\label{eq:lame-dirichlet}
 \mathcal{L}_{\lambda,\mu}u:=\operatorname{div}\sigma_{\lambda,\mu}(u)=0
 \quad\hbox{in }\Omega,
 \qquad u|_{\partial\Omega}=f
\end{equation}
has a unique weak solution $u_f\in H^1(\Omega)^3$ for every $f\in H^{1/2}(\partial\Omega)^3$ \cite{ciarlet1988mathematical}.
Its Dirichlet-to-Neumann map is
\begin{equation}\label{eq:elastic-dn-map}
 \Lambda_{\lambda,\mu}^{\Omega}:
 H^{1/2}(\partial\Omega)^3\longrightarrow H^{-1/2}(\partial\Omega)^3,
 \qquad
 \Lambda_{\lambda,\mu}^{\Omega}f=\sigma_{\lambda,\mu}(u_f)\nu|_{\partial\Omega},
\end{equation}
where $\nu$ is the outward unit normal and the traction is understood in the weak sense.
We consider the inverse problem of recovering both Lam\'e parameters from knowledge of $\Lambda_{\lambda,\mu}^{\Omega}$.
We are mainly concerned with the unique identifiability issue, namely, sufficient conditions under which
\begin{equation}\label{eq:identifiability}
 \Lambda_{\lambda_1,\mu_1}^{\Omega}
 =\Lambda_{\lambda_2,\mu_2}^{\Omega}
 \quad\Longrightarrow\quad
 (\lambda_1,\mu_1)=(\lambda_2,\mu_2)\quad\hbox{in }\Omega.
\end{equation}

We also consider \emph{partial boundary data}, for which only a nonempty relatively open subset $\Gamma$ of the smooth part of $\partial\Omega$ is accessible.
Displacements are prescribed on $\Gamma$ and set to zero on $\partial\Omega\setminus\Gamma$, and the resulting traction is measured only on $\Gamma$.
We denote the corresponding localized Dirichlet-to-Neumann map by $\Lambda_{\lambda,\mu}^{\Gamma}$; its precise definition is given in Section~\ref{subsec:weak-formulation}.

Two established approaches to this problem are relevant to the present work.
Perturbative uniqueness is known when $\nabla\mu$ is sufficiently small in a sufficiently high $C^k$ norm, under the corresponding regularity and a priori bounds; see the results and corrected theorem in \cite{eskin2002inverse,eskin2003inverse,nakamura2003global}.
With partial boundary data, both parameters can be recovered when $\mu$ is constant and the boundary values of $\lambda$ are known on the inaccessible part \cite{imanuvilov2012uniqueness}.
Another approach combines boundary determination with real analyticity.
Under suitable smoothness hypotheses, boundary measurements determine the Taylor coefficients of the Lam\'e parameters \cite{lin2017boundary,nakamura1995inverse}.
If both parameters are real analytic in a neighborhood of the closed connected domain, these boundary values and derivatives determine them throughout the domain by analytic continuation \cite{tan2023determining}.
These results motivate the search for uniqueness beyond nearly constant shear moduli and real-analytic Lam\'e parameters.

In this paper, we show how the structure and regularity of the shear modulus lead to unique identifiability in isotropic elasticity.
Symmetry and separation of variables yield uniqueness for smooth coefficients, while quasianalyticity provides a complementary route by propagating boundary information into the interior.
Our main results are as follows.
\begin{enumerate}
\item \textbf{Axisymmetric shear moduli.}
We establish unique identifiability of axisymmetric shear moduli on domains of revolution from full torsional boundary data in the bounded measurable setting, and from partial torsional data in the smooth setting.
\item \textbf{Separation of variables.}
We prove that partial boundary data uniquely determine a smooth shear modulus admitting either a multiplicative or an additive separation on a product domain, even when both factors or both summands are unknown.
\item \textbf{Directional quasianalyticity.}
We establish unique identifiability of smooth shear moduli that are quasianalytic in one fixed direction from full boundary data, without requiring the domain to be convex.
\end{enumerate}
In each of the smooth settings above, the full elastic Dirichlet-to-Neumann map uniquely determines both Lam\'e parameters, with the structural or quasianalytic condition imposed only on the shear modulus.

\subsection{Main results}\label{subsec:main-results}
We state our main results for axisymmetric, separated, and directionally quasianalytic shear moduli.

\subsubsection{Axisymmetric shear moduli}\label{subsec:axisymmetric-results}
Let $D\Subset\{(r,z)\in\mathbb{R}^2:r>0\}$ be a bounded connected domain with $C^\infty$ boundary, and define its solid of revolution by
\[
 \Omega_D=\{(r\cos\theta,r\sin\theta,z):(r,z)\in D,\ 0\leq\theta<2\pi\}.
\]
The assumption on $D$ means that $0<r_-\leq r\leq r_+<\infty$ on $\overline{D}$.
Write $e_\theta=(-\sin\theta,\cos\theta,0)$.
For $f\in H^{1/2}(\partial D)$, the torsional boundary displacement is
\[
 Tf=r f(r,z)e_\theta.
\]
Its boundedness as a trace map is proved in Proposition~\ref{prop:torsion}.
If $\widetilde\Gamma\subset\partial D$ is a nonempty relatively open arc, let $\Gamma\subset\partial\Omega_D$ be the surface obtained by rotating that arc.
Define
\[
 \mathcal{T}_{\lambda,\mu}(f,g)
 =\left\langle \Lambda_{\lambda,\mu}^{\Omega_D}Tf,Tg\right\rangle,
 \qquad f,g\in H^{1/2}(\partial D),
\]
and denote its restriction to $\mathcal{X}_{\widetilde\Gamma}(\partial D)\times \mathcal{X}_{\widetilde\Gamma}(\partial D)$ by $\mathcal{T}_{\lambda,\mu}^{\widetilde\Gamma}$, where the trace spaces are defined in Section~\ref{subsec:weak-formulation}.

\Needspace{10\baselineskip}
\begin{theorem}\label{thm:axis}
Let $D$ and $\Omega_D$ be as above, and put $\mu_j(r,\theta,z)=m_j(r,z)$ for $j=1,2$.
\begin{enumerate}
\item Suppose that $D$ is simply connected, $m_j\in L^\infty(D)$, $\lambda_j\in L^\infty(\Omega_D)$, and both pairs satisfy \eqref{eq:convexity}.
If their full torsional forms satisfy
\[
 \mathcal{T}_{\lambda_1,\mu_1}(f,g)=\mathcal{T}_{\lambda_2,\mu_2}(f,g)
 \quad\hbox{for every }f,g\in H^{1/2}(\partial D),
\]
then $\mu_1=\mu_2$ almost everywhere in $\Omega_D$.
For fixed $m$, the entire torsional form is independent of the admissible first parameter $\lambda$, so this conclusion does not determine $\lambda$.
\item Suppose instead that $m_j$ and $\lambda_j$ are smooth in neighborhoods of $\overline{D}$ and $\overline{\Omega_D}$, respectively, and that both pairs satisfy \eqref{eq:convexity}.
Simple connectedness of $D$ is not required in this part.
For any nonempty relatively open arc $\widetilde\Gamma$,
\[
 \mathcal{T}_{\lambda_1,\mu_1}^{\widetilde\Gamma}
 =\mathcal{T}_{\lambda_2,\mu_2}^{\widetilde\Gamma}
 \quad\Longrightarrow\quad \mu_1=\mu_2\ \hbox{on }\overline{\Omega_D}.
\]
If the full elastic maps are equal, then $(\lambda_1,\mu_1)=(\lambda_2,\mu_2)$ on $\overline{\Omega_D}$.
\end{enumerate}
In both parts, the first Lam\'e parameters may be non-axisymmetric.
\end{theorem}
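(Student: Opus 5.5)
Let me describe the structure I would follow. The basic computation is that for an axisymmetric $\mu=m(r,z)$, a torsional field $u=r\phi(r,z)e_\theta$ (equivalently $u=\phi\,(-y,x,0)$) is divergence free. Hence $\lambda$ drops out of $\mathcal{L}_{\lambda,\mu}u=\operatorname{div}(2\mu\varepsilon(u))$. A direct computation in cylindrical coordinates shows that the only nonzero strain components are $\varepsilon_{r\theta}=\tfrac{r}{2}\partial_r\phi$ and $\varepsilon_{\theta z}=\tfrac r2\partial_z\phi$. Therefore the $r$- and $z$-components of $\operatorname{div}(2m\varepsilon(u))$ vanish identically. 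The $\theta$-component reduces to the scalar equation
\[
\operatorname{div}_{r,z}\bigl(r^3 m\,\nabla_{r,z}\phi\bigr)=0\quad\hbox{in }D.
\]
I would therefore show, presumably this is the content of Proposition~\ref{prop:torsion}, that the unique elastic solution with data $Tf$ equals $T\phi_f$, where $\phi_f$ solves this scalar problem with $\phi_f=f$ on $\partial D$. Uniqueness for the elastic Dirichlet problem and the fact that $T\phi_f$ is a weak solution give this. Consequently
\[
\mathcal{T}_{\lambda,\mu}(f,g)=2\pi\int_D r^3 m\,\nabla\phi_f\cdot\nabla\phi_g\,dr\,dz
\]
is the (polarized) Dirichlet-to-Neumann form of the scalar conductivity $\gamma=r^3m$ on the planar domain $D$. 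This formula is independent of $\lambda$, which gives the last remark of part (i). The same identity localizes: for $f,g\in\mathcal{X}_{\widetilde\Gamma}$ we obtain the partial scalar DN map of $\gamma$ on $\widetilde\Gamma$.

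For part (i), $\gamma_j=r^3m_j\in L^\infty(D)$ is bounded above and below, since $r_-\le r\le r_+$ and $m_j\ge c_*$. I would invoke the Astala–Päivärinta theorem: in two dimensions, for a bounded simply connected domain, the DN map determines an $L^\infty$ isotropic conductivity. This yields $\gamma_1=\gamma_2$ a.e., and dividing by $r^3$ gives $m_1=m_2$. Simple connectedness, or an extension argument to a disc, is where that hypothesis enters.

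For part (ii), with smooth $\gamma_j$, I would use the two-dimensional partial data result of Imanuvilov–Uhlmann–Yamamoto, which covers Dirichlet data on an arbitrary open subset and Neumann data on the same subset. Since $\gamma_j$ is smooth, the reduction to a Schrödinger equation with $q=\Delta\sqrt\gamma/\sqrt\gamma$ together with boundary determination on $\widetilde\Gamma$ is standard. The main obstacle I expect is matching the paper's $\mathcal{X}_{\widetilde\Gamma}$ framework, with zero Dirichlet data off $\widetilde\Gamma$, to the hypotheses of that theorem. One must check that $T$ maps $\mathcal{X}_{\widetilde\Gamma}(\partial D)$ into data supported in $\Gamma$, and that the bilinear form on this subspace determines the scalar partial Cauchy data. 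If $D$ is multiply connected, one applies the IUY result for multiply connected planar domains. Recovering $\gamma$ gives $m$, and hence $\mu_1=\mu_2$ on $\overline{\Omega_D}$ by continuity.

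Finally, if the full elastic maps agree, then $\mu_1=\mu_2=:\mu$ is known and smooth. To recover $\lambda$, I would use the known results for a known smooth $\mu$. Boundary determination, for example Nakamura–Uhlmann or the boundary results cited, gives $\lambda$ and all its derivatives on $\partial\Omega$. The interior identification of $\lambda$ with $\mu$ known would follow from the Eskin–Ralston/Nakamura–Uhlmann reduction. I would try the argument of \cite{eskin2002inverse}: with $\mu$ fixed, the equation $\int(\lambda_1-\lambda_2)\operatorname{div}u_1\operatorname{div}u_2=0$ combined with CGO solutions whose divergence behaves like scalar CGOs (obtained via the potential decomposition $u=\nabla\cdot+\dots$ for known smooth $\mu$) gives $\lambda_1=\lambda_2$. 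This last step is the other delicate point. I would rely on the paper's later machinery rather than redo it, noting that no structure on $\lambda$ is used.
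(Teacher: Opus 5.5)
Your proposal follows the paper's proof. The paper likewise uses the exact torsional reduction to the planar conductivity $\gamma=r^3m$ with $\langle\Lambda_{\lambda,\mu}^{\Omega_D}Tf,Tg\rangle=2\pi\langle\Lambda^c_\gamma f,g\rangle$, Astala--P\"aiv\"arinta for part~(i), and the same-arc partial data theorem of Imanuvilov--Uhlmann--Yamamoto for part~(ii). It then recovers $\lambda$ by boundary determination of all jets followed by the Eskin--Ralston known-$\mu$ estimate, applied after a common extension to a ball.

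The one step you leave implicit is that, for $L^\infty$ coefficients, $rv_fe_\theta$ satisfies the weak elastic equation against arbitrary non-axisymmetric test fields $\Phi$. The paper replaces your strong-form computation there: the $\partial_\theta\Phi_r$ and $\partial_\theta\Phi_z$ terms integrate to zero, and the remaining terms pair with the angular average $\frac1{2\pi}\int_0^{2\pi}\Phi_\theta/r\,d\theta\in H_0^1(D)$.
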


Theorem~\ref{thm:axis} is based on an exact weak reduction of the torsional problem to a planar conductivity equation with coefficient $r^3m$.
Since $D$ stays away from the rotation axis, this coefficient is uniformly elliptic.
The reduction identifies full and partial torsional data with the corresponding conductivity data, allowing us to apply the planar uniqueness results.

\subsubsection{Separation of variables}\label{subsec:separated-results}
Let $G\subset\mathbb{R}^2$ be a bounded connected $C^\infty$ domain, let $L>0$, and set
\[
 \Omega=G\times(0,L),\qquad x=(y,t).
\]
Here $t$ is a spatial coordinate.
The product domain is Lipschitz, and its boundary is smooth away from the edges $\partial G\times\{0,L\}$.
Fix a nonempty relatively open arc $\Sigma\subset\partial G$ and set
\[
 \Gamma_0=G\times\{0\},\qquad
 \Gamma_\Sigma=\Sigma\times(0,L),\qquad
 \Gamma=\Gamma_0\cup\Gamma_\Sigma.
\]

\Needspace{10\baselineskip}
\begin{theorem}\label{thm:separation}
Let $\Omega$, $\Sigma$, and $\Gamma$ be as above.
Let $(\lambda_j,\mu_j)$, $j=1,2$, be smooth in a common neighborhood of $\overline{\Omega}$ and satisfy \eqref{eq:convexity}.
Suppose that one of the following representations holds for both candidates on $\overline{G}\times[0,L]$:
\begin{enumerate}
\item Multiplicative separation:
\[
 \mu_j(y,t)=\alpha_j(y)\beta_j(t),\qquad j=1,2,
\]
where $\alpha_j\in C^\infty(\overline{G})$ and $\beta_j\in C^\infty([0,L])$ are positive and both are unknown.
\item Additive separation:
\[
 \mu_j(y,t)=a_j(y)+b_j(t),\qquad j=1,2,
\]
where $a_j\in C^\infty(\overline{G})$ and $b_j\in C^\infty([0,L])$ are both unknown.
Neither summand is required to be positive separately; positivity of their sum is included in \eqref{eq:convexity}.
\end{enumerate}
In either case,
\[
 \Lambda_{\lambda_1,\mu_1}^{\Gamma}
   =\Lambda_{\lambda_2,\mu_2}^{\Gamma}
 \quad\Longrightarrow\quad
 \mu_1=\mu_2\quad\hbox{on }\overline{\Omega}.
\]
If the full elastic Dirichlet-to-Neumann maps are equal, then
\[
 (\lambda_1,\mu_1)=(\lambda_2,\mu_2)
 \quad\hbox{on }\overline{\Omega}.
\]
No structural condition is imposed on either $\lambda_j$.
\end{theorem}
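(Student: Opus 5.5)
The plan is to combine two inputs: boundary determination of $\mu$ and its derivatives on the accessible set $\Gamma$, and the rigidity of the two separated representations. For the full-data statement, a known $\mu$ reduces the problem to recovering $\lambda$ alone.

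\emph{Step 1: boundary determination on $\Gamma$.} Equality of $\Lambda^{\Gamma}_{\lambda_1,\mu_1}$ and $\Lambda^{\Gamma}_{\lambda_2,\mu_2}$ determines the Taylor series of $\lambda_j$ and $\mu_j$ at every smooth point of $\Gamma$. I would obtain this by localizing the Nakamura--Uhlmann / Lin--Nakamura boundary determination \cite{lin2017boundary,nakamura1995inverse} near such points. This is possible because the inputs are supported in $\Gamma$ and the tractions are read on $\Gamma$, so only the local parametrix of the DN map enters. Applying it on $\Gamma_0=G\times\{0\}$ gives $\partial_t^k\mu_1=\partial_t^k\mu_2$ on $G\times\{0\}$ for all $k$. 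Applying it on $\Gamma_\Sigma$ gives $\partial_\nu^k\mu_1=\partial_\nu^k\mu_2$ on $\Sigma\times(0,L)$, where $\nu$ is the normal in the $y$-directions. I expect this step to be the main technical obstacle. One must check that the partial-data map, with zero Dirichlet data off $\Gamma$, still yields the full symbol expansion at interior points of $\Gamma_0$ and $\Gamma_\Sigma$. One must also keep away from the edges $\partial G\times\{0,L\}$, which is harmless since the resulting identities are used only on open sets.

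\emph{Step 2: using the separated structure.}

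\emph{Multiplicative case.} From $t=0$ we get $\alpha_1(y)\beta_1(0)=\alpha_2(y)\beta_2(0)$ for $y\in G$. Hence $\alpha_1=c\,\alpha_2$ with $c=\beta_2(0)/\beta_1(0)>0$. Next fix $y_0\in\Sigma$. The identity $\mu_1=\mu_2$ on $\Sigma\times(0,L)$ gives $\alpha_1(y_0)\beta_1(t)=\alpha_2(y_0)\beta_2(t)$, and so $c\,\beta_1=\beta_2$ on $(0,L)$. Multiplying, $\mu_1=\alpha_1\beta_1=\alpha_2\beta_2=\mu_2$ in $G\times(0,L)$, and by continuity on $\overline\Omega$.

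\emph{Additive case.} From $t=0$ we get $a_1-a_2=b_2(0)-b_1(0)=:c$ on $G$. From $\Sigma\times(0,L)$ we get $b_1(t)-b_2(t)=a_2(y_0)-a_1(y_0)=-c$. Therefore $\mu_1-\mu_2=c-c=0$.

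In both cases only the zeroth-order boundary traces from Step 1 are used. Derivatives are not needed, since each factor or summand is pinned down up to the gauge that cancels. Continuity up to the boundary, which follows from smoothness on a neighborhood of $\overline\Omega$, transfers the traces on $G\times\{0\}$ and on $\Sigma\times(0,L)$ to the closures.

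\emph{Step 3: recovering $\lambda$ from the full map.} Once $\mu:=\mu_1=\mu_2$ is known and smooth, I would recover $\lambda$. Full-data boundary determination gives all derivatives of $\lambda_1-\lambda_2$ on $\partial\Omega$. For the interior, I would use the Alessandrini-type identity
\[
\int_\Omega(\lambda_1-\lambda_2)\,\operatorname{div}u_1\,\operatorname{div}u_2\,dx=0,
\]
valid for solutions $u_j$ of $\mathcal L_{\lambda_j,\mu}u_j=0$, since the $\mu$-terms cancel. I would then take complex geometrical optics solutions for the Lam\'e system with a common smooth $\mu$, for example via the Ikehata / Eskin--Ralston reduction to a Schr\"odinger-type system in which $\mu$ is absorbed. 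These make $\operatorname{div}u_1\operatorname{div}u_2$ approximate $e^{ix\cdot\xi}$, which shows that the Fourier transform of $\lambda_1-\lambda_2$ vanishes. Alternatively I would cite the known result that $\lambda$ is determined when $\mu$ is known \cite{eskin2002inverse,nakamura2003global}, which in the perturbative setting requires smallness only of $\nabla(\mu_1-\mu_2)=0$. Care is needed that the CGO leading terms produce nondegenerate divergences; with $\mu$ fixed this is where the Eskin--Ralston framework is invoked.
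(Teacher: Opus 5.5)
Your Steps 1 and 2 are the paper's proof. You apply Lemma~\ref{lem:boundary-jets} at order zero on $\Gamma_0$ and on $\Gamma_\Sigma$, then use the same gauge-cancellation algebra, extending the bottom relation to $\overline G$ by continuity so that it can be evaluated at $y_0\in\Sigma$. Step 3 likewise reduces, as in Proposition~\ref{prop:known-shear}, to the known-$\mu$ result of Eskin--Ralston. Two points in Step 3 need correcting.

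\emph{The hypothesis of the cited result.} The perturbative uniqueness theorem requires $\nabla\mu_j$ itself to be small, i.e.\ a nearly constant shear modulus. So $\nabla(\mu_1-\mu_2)=0$ does not verify its hypothesis, and relying on it would fail exactly in the non-perturbative regime the theorem is about. The input you need is the smallness-free weighted estimate of \cite[Theorem~2]{eskin2002inverse}:
\[
 \|(\lambda_1+\mu_1)-(\lambda_2+\mu_2)\|_\tau\le C\tau^{-1}\|\mu_1-\mu_2\|_\tau,
\]
which forces $\lambda_1=\lambda_2$ once $\mu_1=\mu_2$.

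\emph{The domain.} $G\times(0,L)$ is only Lipschitz. Before citing that estimate, the paper extends both pairs to a ball on which they coincide outside $\Omega$ (Lemma~\ref{lem:common-extension}). The full-data boundary jets, carried to the edges by continuity, make the zero extension of $\lambda_2-\lambda_1$ smooth. The paper then checks that the Dirichlet-to-Neumann maps on the ball still agree. Your Step 3 should include this reduction rather than applying the cited result directly on the product domain.
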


\begin{remark}\label{rem:separation-ambiguity}
Under either data equality in Theorem~\ref{thm:separation}, the factors in case~(i) are determined up to $(\alpha,\beta)\mapsto(c\alpha,c^{-1}\beta)$, where $c>0$.
In case~(ii), the summands are determined up to $(a,b)\mapsto(a+c,b-c)$, where $c\in\mathbb{R}$.
The normalizations $\beta(0)=1$ and $b(0)=0$ give individual uniqueness in the respective cases.
\end{remark}

\begin{remark}\label{rem:separation-examples}
The multiplicative representation includes translation-invariant shear moduli when $\beta\equiv1$ and layered shear moduli when $\alpha$ is constant.
More generally, both factors or both summands may vary smoothly with arbitrary oscillation.

The additive class is not a special case of the multiplicative class.
For example, $F(y,t)=2+y_1+t$ is positive on the unit disk times $(0,1)$ and is additively separated.
Every smooth product satisfies $F\partial_{y_1t}F-(\partial_{y_1}F)(\partial_tF)=0$, whereas this expression equals $-1$ for the displayed additive example.
\end{remark}

\subsubsection{Directional quasianalyticity}\label{subsec:quasianalytic-results}
Quasianalyticity preserves the identity principle beyond real-analytic functions, providing a natural route to uniqueness in inverse boundary problems \cite{chen2026quasianalyticity,daude2026local}.
We use the following one-dimensional Denjoy--Carleman class.

\Needspace{14\baselineskip}
\begin{definition}\label{def:quasianalytic}
Let $M=(M_k)_{k\geq0}$ be a nondecreasing positive sequence with $M_0=1$ and
\begin{equation}\label{eq:qa-weight}
 M_k^2\leq M_{k-1}M_{k+1}\quad(k\geq1),
 \qquad
 \sum_{k=0}^{\infty}\frac{M_k}{(k+1)M_{k+1}}=\infty.
\end{equation}
For a connected open interval $J\subset\mathbb{R}$, the \emph{Denjoy--Carleman class of Roumieu type} $C^{\{M\}}(J)$ consists of the functions $q\in C^\infty(J)$ such that, for every compact $K\Subset J$, there are $C,R>0$ with
\[
 \sup_{s\in K}|q^{(k)}(s)|\leq CR^k k!M_k,
 \qquad k\geq0.
\]
The constants may depend on $q$ and $K$, but not on $k$.
\end{definition}

By the Denjoy--Carleman theorem \cite{rainer2014composition,thilliez2008quasianalytic}, this linear class is quasianalytic: for every $q\in C^{\{M\}}(J)$ and $s_*\in J$,
\begin{equation}\label{eq:qa-definition}
 q^{(k)}(s_*)=0\ \hbox{for all integers }k\geq0
 \quad\Longrightarrow\quad q=0\ \hbox{on }J.
\end{equation}

\begin{theorem}\label{thm:qa}
Let $\Omega\subset\mathbb{R}^3$ be bounded, connected, and $C^\infty$.
Let $\lambda_j,\mu_j$, $j=1,2$, be smooth in a common open neighborhood $U$ of $\overline{\Omega}$ and satisfy \eqref{eq:convexity}.
Fix $e\in S^2$ and, for $y\in e^\perp$, define
\[
 I_y=\{s\in\mathbb{R}:y+se\in\Omega\}.
\]
For each nonempty connected component $(a,b)$ of $I_y$, suppose there is a connected open interval $J\supset[a,b]$, with $y+Je\subset U$, such that the two functions
\[
 s\longmapsto\mu_j(y+se),\qquad s\in J,\quad j=1,2,
\]
belong to a common class $C^{\{M\}}(J)$ as in Definition~\ref{def:quasianalytic}, with $M$ prescribed independently of the candidates.
The weight may vary between components; the defining constants need not be uniform across lines.
Then equality of the full elastic Dirichlet-to-Neumann maps implies
\[
 (\lambda_1,\mu_1)=(\lambda_2,\mu_2)\quad\hbox{on }\overline{\Omega}.
\]
No quasianalyticity or other structural assumption is imposed on $\lambda_j$, and no convexity of $\Omega$ is required.
\end{theorem}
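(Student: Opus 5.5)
The plan has two stages. First, boundary determination gives all normal (hence all) derivatives of $\mu_1-\mu_2$ at $\partial\Omega$. Second, quasianalyticity along lines parallel to $e$ propagates this into the interior. With $\mu$ known, $\lambda$ is then recovered by a separate argument.

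\emph{Boundary determination.} Since $\partial\Omega$ is $C^\infty$ and the coefficients are smooth near $\overline{\Omega}$, I will invoke the boundary determination results cited in the introduction \cite{lin2017boundary,nakamura1995inverse}. These say that $\Lambda_{\lambda_1,\mu_1}^{\Omega}=\Lambda_{\lambda_2,\mu_2}^{\Omega}$ forces
\[
 \partial^\alpha\lambda_1=\partial^\alpha\lambda_2,\qquad \partial^\alpha\mu_1=\partial^\alpha\mu_2\quad\hbox{on }\partial\Omega
\]
for all multi-indices $\alpha$. Put $q=\mu_1-\mu_2$. Fix $y\in e^\perp$ and a component $(a,b)$ of $I_y$. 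Then $y+ae\in\partial\Omega$, so every derivative $\frac{d^k}{ds^k}q(y+se)$ vanishes at $s=a$: it is a directional derivative of a function that is flat at the boundary point. The class $C^{\{M\}}(J)$ is a linear space (closed under differences with the same weight and the constants enlarged), so $s\mapsto q(y+se)$ lies in $C^{\{M\}}(J)$. By \eqref{eq:qa-definition} it vanishes on $J\supset[a,b]$.

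Every point of $\Omega$ lies on some such segment, so $\mu_1=\mu_2$ in $\Omega$, and by continuity on $\overline{\Omega}$. Note that convexity is not needed: each component of each line is handled separately, and its endpoint lies on $\partial\Omega$. One subtlety is that the endpoint $y+ae$ may be a tangential point of $\partial\Omega$. This causes no trouble, because boundary determination gives the full jet of $q$ at every boundary point, not only normal derivatives.

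\emph{Recovering $\lambda$.} With $\mu:=\mu_1=\mu_2$ known and smooth, I need $\lambda_1=\lambda_2$ from equal Dirichlet-to-Neumann maps. The plan is as follows.
\begin{enumerate}
\item Use the Alessandrini-type identity
\[
 \int_\Omega(\lambda_1-\lambda_2)\operatorname{div}u_1\,\operatorname{div}u_2\,dx=0
\]
for all solutions $u_j$ of $\mathcal{L}_{\lambda_j,\mu}u_j=0$. The $\mu$ terms cancel exactly because the shear moduli coincide.
\item Reduce to a problem with known principal behaviour: $\operatorname{div}u$ satisfies a second-order equation whose leading part is $(\lambda+2\mu)\Delta$ modulo lower order terms, and the shear part is the same for both candidates.
\item Argue either with the complex geometrical optics solutions of Nakamura--Uhlmann/Eskin--Ralston type for the Lam\'e system, whose divergences are densely rich, or directly via the known result that when $\mu$ is determined, the products $\operatorname{div}u_1\operatorname{div}u_2$ are dense in $L^1$. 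From this, $\lambda_1=\lambda_2$ follows.
\end{enumerate}
I expect the paper has an earlier lemma of exactly this kind, used also in Theorems~\ref{thm:axis} and~\ref{thm:separation}, where the full map again gives both parameters. I would invoke that lemma here.

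\emph{Main obstacle.} The step I expect to be hardest is this last $\lambda$ recovery in the absence of smallness of $\nabla\mu$, in case it is not available as a prior lemma. If it must be proved here, I would first try a unique continuation argument. Set $w=u_1-u_2$ for common Dirichlet data. It satisfies
\[
 \mathcal{L}_{\lambda_1,\mu}w=-\nabla\bigl((\lambda_1-\lambda_2)\operatorname{div}u_2\bigr),
\]
and $w$ has vanishing Cauchy data. Combining this with the density of $\operatorname{div}u_2$ over interior points, obtained via Runge approximation for the Lam\'e system with smooth coefficients, should give $\lambda_1-\lambda_2=0$.
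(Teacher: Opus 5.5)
Your argument for $\mu$ is the paper's own: boundary determination (Lemma~\ref{lem:boundary-jets}) makes all ambient derivatives of $\mu_1-\mu_2$ vanish on $\partial\Omega$, which also covers tangential endpoints. The line restriction of the difference lies in $C^{\{M\}}(J)$ by linearity, and all its $s$-derivatives vanish at the endpoint $y+ae\in\partial\Omega$ of each component. The identity principle on $J$ then gives $\mu_1=\mu_2$ component by component, with no convexity needed.

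For $\lambda$ the paper does what you anticipate and invokes Proposition~\ref{prop:known-shear}. That result is proved differently from your sketches: both pairs are first extended to a ball with a common shear modulus and equal maps (Lemma~\ref{lem:common-extension}). Then the Eskin--Ralston weighted estimate $\|\lambda_1+\mu_1-\lambda_2-\mu_2\|_\tau\le C\tau^{-1}\|\mu_1-\mu_2\|_\tau$ gives $\lambda_1=\lambda_2$ without any smallness of $\nabla\mu$. Your fallback arguments would not replace this step. There is no citable $L^1$-density result for the products $\operatorname{div}u_1\operatorname{div}u_2$ with variable $\mu$; proving such density is essentially the whole difficulty. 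Runge approximation together with the vanishing Cauchy data of $u_1-u_2$ only reproduces the integral identity you started from.
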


Theorem~\ref{thm:qa} uses quasianalyticity along lines parallel to the fixed direction $e$.
On each connected component of a line section, boundary determination provides all derivatives of $\mu$ at an endpoint, and quasianalyticity determines $\mu$ throughout that component.
Since this argument applies to each component separately, the domain need not be convex.

\begin{remark}\label{rem:curve-qa}
Theorem~\ref{thm:qa} extends to shear moduli that are quasianalytic along a prescribed family of smooth curves.
When these curves start on the observed boundary and cover $\Omega$, localized boundary data determine $\mu$ throughout $\Omega$, while full boundary data determine both Lam\'e parameters; see Proposition~\ref{prop:curve-qa}.
\end{remark}

\begin{remark}
Theorem~\ref{thm:qa} also holds with $C^{\{M\}}(J)$ replaced by any prescribed linear subspace $\mathcal{Q}(J)\subset C^\infty(J)$ satisfying \eqref{eq:qa-definition} for every $q\in\mathcal{Q}(J)$ and $s_*\in J$.
\end{remark}

\begin{remark}\label{rem:analytic-comparison}
Theorem~\ref{thm:qa} extends uniqueness for real-analytic Lam\'e parameters to a strictly larger class of smooth coefficients.
The shear modulus need only be quasianalytic in one fixed direction, allowing classes strictly larger than the real-analytic class \cite{thilliez2008quasianalytic}.
No analyticity is required in the transverse variables or for $\lambda$.
\end{remark}

\subsection{Organization of the paper}\label{subsec:organization}
The rest of the paper is organized as follows.
Section~\ref{sec:preliminaries} gives the weak formulation, boundary determination, and uniqueness for a known shear modulus.
Section~\ref{sec:proof-axis} proves Theorem~\ref{thm:axis} through the torsional reduction to planar conductivity.
Sections~\ref{sec:proof-separation} and~\ref{sec:proof-qa} prove Theorems~\ref{thm:separation} and~\ref{thm:qa}, respectively.

\Needspace{8\baselineskip}
\section{Preliminaries}\label{sec:preliminaries}
We collect the weak formulation and the results shared by the proofs of the main theorems.

\subsection{Weak formulation and boundary measurements}\label{subsec:weak-formulation}
Throughout the paper, smoothness on a closed set means restriction of a $C^\infty$ function defined on an open neighborhood of that set.
For smooth coefficients, \eqref{eq:convexity} holds on the closed domain by continuity.

For bounded coefficients satisfying \eqref{eq:convexity}, define the elastic energy form by
\begin{equation}\label{eq:energy}
 \mathcal{B}_{\lambda,\mu}^{\Omega}(u,v)
 =\int_\Omega\bigl[\lambda(\operatorname{div} u)(\operatorname{div} v)
             +2\mu\varepsilon(u):\varepsilon(v)\bigr]\,dx,
 \qquad u,v\in H^1(\Omega)^3.
\end{equation}
The colon denotes the Frobenius product $A:B=\sum_{i,k=1}^3 A_{ik}B_{ik}$.
When the domain is clear, its superscript is omitted.
The trace theorem, Korn's inequality, and the Lax--Milgram theorem give the unique weak solution $u_f$ for every $f\in H^{1/2}(\partial\Omega)^3$ \cite{ciarlet1988mathematical}.
The full Dirichlet-to-Neumann map is defined by
\begin{equation}\label{eq:dn}
 \left\langle \Lambda_{\lambda,\mu}^{\Omega}f,g\right\rangle
 =\mathcal{B}_{\lambda,\mu}^{\Omega}(u_f,V_g),
 \qquad V_g|_{\partial\Omega}=g.
\end{equation}
The right-hand side is independent of the extension $V_g$, since $\mathcal{B}_{\lambda,\mu}^{\Omega}(u_f,\phi)=0$ for every $\phi\in H_0^1(\Omega)^3$.
This weak definition applies to bounded measurable coefficients on Lipschitz domains, including product domains with edges.
In the smooth setting, it agrees with the traction $\sigma_{\lambda,\mu}(u_f)\nu$.

For a relatively open set $\Gamma$ in the smooth part of $\partial\Omega$, set
\[
 \mathcal{X}_\Gamma(\partial\Omega)
 :=\overline{C_c^\infty(\Gamma)}^{\,H^{1/2}(\partial\Omega)}.
\]
The localized boundary map is the bilinear form
\[
 \left\langle \Lambda_{\lambda,\mu}^{\Gamma}f,g\right\rangle
 :=\left\langle \Lambda_{\lambda,\mu}^{\Omega}f,g\right\rangle,
 \qquad f,g\in\mathcal{X}_\Gamma(\partial\Omega)^3.
\]
Thus both the displacement $f$ and the traction test $g$ vanish almost everywhere on $\partial\Omega\setminus\Gamma$.
Only these pairings are measured; the traction need not vanish on $\partial\Omega\setminus\Gamma$.

All forms are extended bilinearly to complex arguments.
Equality for real inputs therefore implies equality for complex inputs; the sesquilinear convention used in some references is recovered by conjugating the second argument.

\Needspace{8\baselineskip}
\subsection{Local boundary determination}\label{subsec:boundary-determination}
The following lemma expresses local boundary determination in Cartesian derivatives.
For full data, continuity extends the identities to the edges of product domains.

\begin{lemma}\label{lem:boundary-jets}
Let $\Omega$ be a bounded connected domain with $C^\infty$ boundary, or a product domain $G\times(0,L)$ with the geometry specified before Theorem~\ref{thm:separation}.
Let $(\lambda_j,\mu_j)$, $j=1,2$, be smooth in a common neighborhood of $\overline{\Omega}$ and satisfy \eqref{eq:convexity}.
Let $W$ be a nonempty relatively open subset of the smooth part of $\partial\Omega$.
If $\Lambda_{\lambda_1,\mu_1}^{W}=\Lambda_{\lambda_2,\mu_2}^{W}$, then
\[
 D^\eta(\lambda_1-\lambda_2)(p)
 =D^\eta(\mu_1-\mu_2)(p)=0
 \quad(p\in W,\ \eta\in\mathbb{N}_0^3),
\]
where $\mathbb{N}_0=\{0,1,2,\ldots\}$ and $D^\eta$ denotes a Cartesian derivative.
If the full elastic maps are equal, these identities hold on all of $\partial\Omega$, including the edges of a product domain.
\end{lemma}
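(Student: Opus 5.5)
We will prove the lemma by localizing the known boundary-determination results for the Lamé system (Nakamura--Uhlmann \cite{nakamura1995inverse}, and the partial-data version in Lin--Nakamura \cite{lin2017boundary}). The idea is to read the localized map as a pseudodifferential operator near each point of $W$ and extract Taylor coefficients from its full symbol.

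First we reduce to a local statement at a fixed $p\in W$. Choose a small boundary neighbourhood $W_p\Subset W$ of $p$ on which $\partial\Omega$ is smooth. Equality of the bilinear forms on $\mathcal{X}_W(\partial\Omega)^3$ implies equality on $C_c^\infty(W_p)^3\times C_c^\infty(W_p)^3$. Hence the Schwartz kernels of $\Lambda^{\Omega}_{\lambda_1,\mu_1}$ and $\Lambda^{\Omega}_{\lambda_2,\mu_2}$ agree on $W_p\times W_p$. For smooth coefficients and a boundary that is smooth near $W_p$, each map restricted to $W_p$ is a classical pseudodifferential operator of order one modulo smoothing. This follows from the standard factorization of $\mathcal{L}_{\lambda,\mu}$ near a smooth boundary piece: only a neighbourhood of $W_p$ enters the parametrix, so edges of the product domain elsewhere are irrelevant. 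Equality of kernels on $W_p\times W_p$ therefore gives equality of the full symbols in boundary normal coordinates on $W_p$.

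Next we invoke the symbol computation of \cite{nakamura1995inverse,lin2017boundary}. The principal symbol determines $\lambda$ and $\mu$ on $W_p$. Recursively, the homogeneous term of order $-k$ determines $\partial_\nu^k\lambda$ and $\partial_\nu^k\mu$ on $W_p$, modulo quantities already known: lower normal derivatives together with their tangential derivatives, which are determined because they are known on the open set $W_p$. By induction, every normal derivative of $\lambda_1-\lambda_2$ and $\mu_1-\mu_2$ vanishes on $W_p$. Tangential derivatives of these vanish as well, since they vanish on an open piece of the surface. Expressing Cartesian derivatives $D^\eta$ through boundary normal coordinates, which are smooth near $W_p$, gives $D^\eta(\lambda_1-\lambda_2)(p)=D^\eta(\mu_1-\mu_2)(p)=0$.

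For full data we apply the above with $W$ equal to the whole smooth part of $\partial\Omega$. On a product domain this smooth part is dense in $\partial\Omega$: the edges $\partial G\times\{0,L\}$ lie in the closure of the lateral surface. The coefficients are smooth in a neighbourhood of $\overline\Omega$, so each $D^\eta(\lambda_1-\lambda_2)$ and $D^\eta(\mu_1-\mu_2)$ is continuous and vanishes on a dense subset of $\partial\Omega$. Hence it vanishes at the edges too.

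The main obstacle is the recursive step in the second paragraph. One has to check that the order $-k$ symbol depends on $(\partial_\nu^k\lambda,\partial_\nu^k\mu)$ through an injective linear map, with the lower-order data entering only as known terms. Here we rely on the cited computation rather than redoing it. The one point to verify is that in the partial-data setting of \cite{lin2017boundary} this is purely local, requiring only smoothness near $W_p$ and the strong convexity \eqref{eq:convexity}. This must hold also when $\Omega$ is a Lipschitz product domain, provided $W_p$ stays away from the edges.
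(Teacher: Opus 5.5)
Your proposal follows essentially the same route as the paper's proof. Both localize to a smooth patch inside $W$, identify the full symbols of the two order-one pseudodifferential operators there, recover normal derivatives recursively, convert to Cartesian derivatives, and pass to the edges by continuity when the full maps agree. The only substantive difference is in the recursive step: where you defer its injectivity to the literature, the paper makes it explicit by comparing each candidate with a reference pair built from the common Taylor polynomials, obtaining a $2\times 2$ system with determinant $\mu_0(\lambda_0+3\mu_0)^2>0$; note also that the $k$th normal derivatives enter the symbol term of order $1-k$, not $-k$.
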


\begin{proof}
\emph{Step 1: localization and boundary values.}
Work in a smooth boundary patch contained in $W$.
Choose a smaller patch $W'\Subset W$.
Let $\chi$ be a smooth cutoff supported in $W$, with $\chi=1$ on $W'$.
For arbitrary $f,g\in H^{1/2}(\partial\Omega)^3$,
\[
 \left\langle (\Lambda_{\lambda_1,\mu_1}^{\Omega}
             -\Lambda_{\lambda_2,\mu_2}^{\Omega})(\chi f),\chi g\right\rangle=0.
\]
Here $\chi f$ and $\chi g$ belong to $\mathcal{X}_W(\partial\Omega)^3$ by approximation inside the smooth chart.
Thus, as operators from $H^{1/2}(\partial\Omega)^3$ to $H^{-1/2}(\partial\Omega)^3$,
\[
 \chi\bigl(\Lambda_{\lambda_1,\mu_1}^{\Omega}
           -\Lambda_{\lambda_2,\mu_2}^{\Omega}\bigr)\chi=0.
\]
On $W'$, the localized maps are classical pseudodifferential operators of order one \cite{nakamura1995inverse,tan2023determining}.
For a product domain, their boundary parametrices are constructed inside a smooth face.
Local boundary regularity makes the correction from the rest of the boundary smoothing on $W'$.
Thus the edges do not affect the local symbols.
In any common boundary chart and frame, the homogeneous symbol terms satisfy
\[
 p_{1-k}^{(1)}(\xi',\zeta')=p_{1-k}^{(2)}(\xi',\zeta'),
 \qquad k\geq0,\quad \zeta'\ne0,
\]
over $W'$, since $\chi=1$ there.

The local reconstruction in \citetext{\citealp[Theorem~1.1 and Section~4]{lin2017boundary}; \citealp{nakamura1995inverse}} first determines the boundary values of both parameters.

\emph{Step 2: higher normal derivatives.}
Higher normal derivatives are recovered recursively from the lower orders \cite{tan2023determining}.

Fix a smooth boundary normal chart $x=F(\xi',s)$, with $s$ increasing into $\Omega$.
Keep the displacement components in the original Cartesian basis.
Put $J=DF$ and $S=J^{-1}$.
For $q=\lambda,\mu$, set $\widehat q_j=q_j\circ F$ and $\delta\widehat q=\widehat q_1-\widehat q_2$.
If $C^{(j)}_{ik\ell m}$ is the Cartesian elastic tensor, changing variables in the energy gives the coefficient
\[
 \widehat C^{(j)}_{ia\ell b}
 =|\det J|\sum_{k,m=1}^3
     C^{(j)}_{ik\ell m}(F(\xi',s))S_{ak}S_{bm}
 =A_{ia\ell b}\widehat\lambda_j+B_{ia\ell b}\widehat\mu_j,
\]
where the tensor indices $i,a,\ell,b,k,m$ range from $1$ to $3$, and the known geometric tensors are
\[
 \begin{aligned}
 A_{ia\ell b}&=|\det J|S_{ai}S_{b\ell},\\
 B_{ia\ell b}&=|\det J|
       \left(\delta_{i\ell}\sum_{k=1}^3S_{ak}S_{bk}
                              +S_{a\ell}S_{bi}\right).
 \end{aligned}
\]
Indeed, $C^{(j)}_{ik\ell m}=\lambda_j\delta_{ik}\delta_{\ell m} +\mu_j(\delta_{i\ell}\delta_{km}+\delta_{im}\delta_{k\ell})$.
The factor $|\det J|$ accounts for the volume change.
The transformed tensor need not be isotropic.

Let $V'=\{\xi':F(\xi',0)\in W'\}$.
For $k\geq1$, suppose inductively that
\[
 \partial_s^\ell\delta\widehat\lambda(\xi',0)
 =\partial_s^\ell\delta\widehat\mu(\xi',0)=0,
 \qquad \xi'\in V',\quad 0\leq\ell<k.
\]
The product rule, applied to the preceding tensor identity, gives
\[
 \partial_s^k(\widehat C^{(1)}-\widehat C^{(2)})
 =\sum_{\ell=0}^k\binom{k}{\ell}
   \left[(\partial_s^{k-\ell}A)\partial_s^\ell\delta\widehat\lambda
        +(\partial_s^{k-\ell}B)\partial_s^\ell\delta\widehat\mu\right].
\]
At $s=0$, every term with $\ell<k$ is zero by induction.
Therefore the only undetermined terms at this order are
\[
 \left.\partial_s^k(\widehat C^{(1)}-\widehat C^{(2)})\right|_{s=0}
 =A(\xi',0)\partial_s^k\delta\widehat\lambda(\xi',0)
  +B(\xi',0)\partial_s^k\delta\widehat\mu(\xi',0).
\]
To choose a common reference operator, define
\[
 P_{k-1}^q(\xi',s)
 =\sum_{\ell=0}^{k-1}\frac{s^\ell}{\ell!}
       \partial_s^\ell\widehat q_1(\xi',0),
 \qquad q=\lambda,\mu.
\]
The induction hypothesis gives the same polynomials for the second candidate.
On a sufficiently thin neighborhood $V$ of $\overline{W'}$, continuity gives
\[
 P_{k-1}^{\mu}\circ F^{-1}\geq c_*/2,
 \qquad
 (3P_{k-1}^{\lambda}+2P_{k-1}^{\mu})\circ F^{-1}\geq c_*/2.
\]
Choose a constant strongly convex pair $(\lambda_*,\mu_*)$.
Take $\rho\in C_c^\infty(V)$ with $0\leq\rho\leq1$ and $\rho=1$ near $\overline{W'}$.
Using this same cutoff for both parameters, set
\[
 q_{\mathrm{ref}}
 =\rho\bigl(P_{k-1}^q\circ F^{-1}\bigr)+(1-\rho)q_*,
 \qquad q=\lambda,\mu.
\]
The products with $\rho$ are extended by zero outside $V$.
With $c_0=\min\{c_*/2,\mu_*,3\lambda_*+2\mu_*\}>0$, the affine inequalities give
\[
 \mu_{\mathrm{ref}}\geq c_0,\qquad
 3\lambda_{\mathrm{ref}}+2\mu_{\mathrm{ref}}\geq c_0.
\]
Moreover,
\[
 \partial_s^\ell(q_{\mathrm{ref}}\circ F)(\xi',0)
 =\partial_s^\ell\widehat q_j(\xi',0),
 \qquad \xi'\in V',\quad 0\leq\ell<k,\quad j=1,2.
\]
To apply the symbol recursion, express displacement and traction in the boundary coordinate frame used in \cite{tan2023determining}.
The change from Cartesian components and the boundary density are determined by $F$ and are common to both candidates.
The localized operators therefore remain equal after this change.
Fix $\xi'\in V'$ and write
\[
 \lambda_0=\widehat\lambda_1(\xi',0)=\widehat\lambda_2(\xi',0),
 \qquad
 \mu_0=\widehat\mu_1(\xi',0)=\widehat\mu_2(\xi',0).
\]
In the recursion at symbol order $1-k$, the entries with one tangential and one normal index and the entry with two normal indices determine two scalar combinations of the $k$th normal derivatives.
Compare each candidate's recursion with that for $(\lambda_{\mathrm{ref}},\mu_{\mathrm{ref}})$ in the same coordinate frame.
The lower-order terms depend only on the geometry and the common derivatives of normal order less than $k$, so they cancel in each comparison.
Subtracting the resulting identities for the two candidates removes the reference contribution, and rescaling gives
\[
 \begin{pmatrix}
 \mu_0&-(2\lambda_0+3\mu_0)\\
 \mu_0^2&\lambda_0^2+4\lambda_0\mu_0+6\mu_0^2
 \end{pmatrix}
 \begin{pmatrix}
 \partial_s^k\delta\widehat\lambda(\xi',0)\\
 \partial_s^k\delta\widehat\mu(\xi',0)
 \end{pmatrix}=0.
\]
The rescaling is valid because $\lambda_0+2\mu_0>0$ and $\lambda_0+3\mu_0>0$.
The determinant of this matrix is
\[
 \mu_0(\lambda_0^2+4\lambda_0\mu_0+6\mu_0^2)
 +(2\lambda_0+3\mu_0)\mu_0^2
 =\mu_0(\lambda_0+3\mu_0)^2>0.
\]
Hence
\[
 \partial_s^k\delta\widehat\lambda(\xi',0)
 =\partial_s^k\delta\widehat\mu(\xi',0)=0,
 \qquad \xi'\in V'.
\]
Induction proves equality of all normal derivatives.
The neighborhood $V$ may depend on $k$.

\emph{Step 3: Cartesian derivatives and edges.}
These equalities hold as identities of smooth functions of $\xi'$.
Tangential differentiation gives
\begin{equation}\label{eq:boundary-jets}
 \begin{aligned}
 \partial_{\xi'}^\eta\partial_s^k\widehat\lambda_1(\xi',0)
  &=\partial_{\xi'}^\eta\partial_s^k\widehat\lambda_2(\xi',0),\\
 \partial_{\xi'}^\eta\partial_s^k\widehat\mu_1(\xi',0)
  &=\partial_{\xi'}^\eta\partial_s^k\widehat\mu_2(\xi',0),
 \end{aligned}
\end{equation}
for every $\xi'\in V'$, $\eta\in\mathbb{N}_0^2$, and $k\in\mathbb{N}_0$.
These are derivatives of the compositions with the fixed chart $F$.
To return to Cartesian derivatives, write $\xi_3=s$ and apply
\[
  (\partial_{x_i}q)\circ F
    =\sum_{a=1}^3 S_{ai}\,\partial_{\xi_a}(q\circ F).
\]
Iteration gives, for $\delta q=q_1-q_2$,
\[
 (D_x^\alpha\delta q)\circ F
 =\sum_{|\beta|\leq|\alpha|}
       c_{\alpha\beta}D_\xi^\beta\delta\widehat q.
\]
The coefficients $c_{\alpha\beta}$ are smooth and depend only on the chart.
Every term on the right vanishes at $s=0$ by \eqref{eq:boundary-jets}.
Thus all Cartesian boundary derivatives agree on $W'$.

Each point of $W$ lies in a smaller patch of this kind.
On a product domain, charts around interior points of the boundary faces avoid the edges.
The same argument therefore applies on each smooth face.
For full data, let $p$ be an edge point.
Take points $p_n$ on a smooth face with $p_n\to p$.
Continuity of the ambient derivatives gives
\[
 D^\alpha(q_1-q_2)(p)
 =\lim_{n\to\infty}D^\alpha(q_1-q_2)(p_n)=0,
 \qquad q=\lambda,\mu.
\]

The proof is complete.
\end{proof}

\subsection{Uniqueness for a known shear modulus}\label{subsec:known-shear}
We first extend the coefficients to a ball while preserving the common shear modulus and equality of the boundary maps.

\begin{lemma}\label{lem:common-extension}
Let $\Omega$ be a bounded connected domain with $C^\infty$ boundary, or a product domain $G\times(0,L)$ with the geometry specified before Theorem~\ref{thm:separation}.
Let $(\lambda_j,\mu_j)$, $j=1,2$, be smooth in a common neighborhood of $\overline{\Omega}$ and satisfy \eqref{eq:convexity}.
Suppose that
\[
 \Lambda_{\lambda_1,\mu_1}^{\Omega}
 =\Lambda_{\lambda_2,\mu_2}^{\Omega},
 \qquad \mu_1=\mu_2\quad\hbox{in }\Omega.
\]
Then there exist a ball $B$ with $\overline\Omega\Subset B$ and smooth strongly convex pairs $(\widetilde\lambda_j,\widetilde\mu_j)$ on $\overline B$ such that
\[
 \begin{aligned}
 (\widetilde\lambda_j,\widetilde\mu_j)|_\Omega
   &=(\lambda_j,\mu_j),\qquad j=1,2,\\
 (\widetilde\lambda_1,\widetilde\mu_1)
   &=(\widetilde\lambda_2,\widetilde\mu_2)
     \quad\hbox{in }B\setminus\Omega,\\
 \widetilde\mu_1&=\widetilde\mu_2\quad\hbox{in }B,\\
 \Lambda_{\widetilde\lambda_1,\widetilde\mu_1}^{B}
   &=\Lambda_{\widetilde\lambda_2,\widetilde\mu_2}^{B}.
 \end{aligned}
\]
Both pairs equal the same constant strongly convex pair near $\partial B$.
\end{lemma}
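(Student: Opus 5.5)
We will build the extension in two stages. First we extend the coefficients smoothly outside $\Omega$ so that the two candidates coincide there. Then we check, by a standard weak-formulation argument, that equality of the Dirichlet-to-Neumann maps on $\partial\Omega$ passes to the ball.

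\emph{Construction of the extension.} By Lemma~\ref{lem:boundary-jets}, full data equality gives $D^\eta(\lambda_1-\lambda_2)=0$ on all of $\partial\Omega$, including the edges of a product domain, for every multi-index $\eta$. Since $\mu_1=\mu_2$ in $\Omega$, the two shear moduli already agree to infinite order on $\partial\Omega$. We take a common smooth extension $\widetilde\mu$ of $\mu_1|_{\overline\Omega}$ and put $\widetilde\mu_1=\widetilde\mu_2=\widetilde\mu$. For the first parameter we take $\lambda_1$ on a neighbourhood of $\overline\Omega$ and set
\[
 \widetilde\lambda_2=\lambda_2 \ \hbox{in } \overline\Omega,
 \qquad
 \widetilde\lambda_2=\lambda_1 \ \hbox{outside } \Omega .
\]
Because $\lambda_1-\lambda_2$ vanishes to infinite order on $\partial\Omega$, the function $(\lambda_2-\lambda_1)\mathbf 1_{\overline\Omega}$ is $C^\infty$ across $\partial\Omega$: all one-sided derivatives match. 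For the product domain this needs Whitney's extension theorem, or the observation that flat functions on a Lipschitz closed set with smooth faces extend by zero smoothly; the edge jets are supplied by Lemma~\ref{lem:boundary-jets}.

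To keep strong convexity away from $\overline\Omega$, we pick a thin neighbourhood $N$ of $\overline\Omega$ on which $\widetilde\mu\ge c_*/2$ and $3\lambda_1+2\widetilde\mu\ge c_*/2$, using continuity. We then blend with a constant pair $(\lambda_*,\mu_*)$ via a cutoff $\rho\in C_c^\infty(N)$ with $\rho=1$ near $\overline\Omega$, applying the same cutoff to both candidates. The conditions are affine, so the convex combination stays strongly convex, exactly as in the construction of $q_{\mathrm{ref}}$ in Lemma~\ref{lem:boundary-jets}. Finally we choose a ball $B\Supset N$. Both pairs then equal $(\lambda_*,\mu_*)$ near $\partial B$ and coincide on $B\setminus\Omega$.

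\emph{Equality of the maps on $B$.} This is the main step, and we handle it by a weak argument rather than through tractions. Let $\widetilde u_1$ solve the problem on $B$ for the first pair with data $h$. On $\Omega$, let $w$ solve the second problem with $w|_{\partial\Omega}=\widetilde u_1|_{\partial\Omega}$, and define $\widetilde u_2=w$ in $\Omega$ and $\widetilde u_2=\widetilde u_1$ in $B\setminus\Omega$. The traces match, so $\widetilde u_2\in H^1(B)^3$.

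For a test function $\phi\in H^1_0(B)^3$ we split
\[
 \mathcal B^B_2(\widetilde u_2,\phi)=\mathcal B^\Omega_2(w,\phi)+\mathcal B^{B\setminus\Omega}_1(\widetilde u_1,\phi),
\]
using that the coefficients coincide outside $\Omega$. By \eqref{eq:dn} and the hypothesis,
\[
 \mathcal B^\Omega_2(w,\phi)=\langle\Lambda_2^\Omega f,\phi|_{\partial\Omega}\rangle=\langle\Lambda_1^\Omega f,\phi|_{\partial\Omega}\rangle=\mathcal B^\Omega_1(\widetilde u_1,\phi),
\]
where $f=\widetilde u_1|_{\partial\Omega}$, and where we use that $\widetilde u_1|_\Omega$ solves the first problem in $\Omega$ with this trace. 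Summing the two pieces gives $\mathcal B^B_1(\widetilde u_1,\phi)=0$. Hence $\widetilde u_2$ is the solution for the second pair on $B$ with the same data $h$.

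For general $\phi\in H^1(B)^3$ the same identity reads
\[
 \mathcal B^B_2(\widetilde u_2,\phi)=\mathcal B^B_1(\widetilde u_1,\phi),
\]
which by \eqref{eq:dn} gives equality of the Dirichlet-to-Neumann maps on $\partial B$. The only care needed is that \eqref{eq:dn} holds with arbitrary $H^1$ extensions, which the Lipschitz weak framework provides even on product domains.
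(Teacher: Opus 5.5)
Your proposal is correct and follows essentially the same route as the paper: flat jets from Lemma~\ref{lem:boundary-jets} make the zero extension of $\lambda_2-\lambda_1$ smooth, a common cutoff blend with a constant pair preserves strong convexity, and the glued solution transfers equality of the maps to $B$. Whitney's theorem is not needed, even at the edges; the paper gets smoothness of the zero extension directly from Taylor's formula for the ambient smooth function. Your final step, testing the split energy identity against arbitrary $\phi\in H^1(B)^3$, is a slightly more direct version of the paper's use of a test field vanishing near $\overline\Omega$.
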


\begin{proof}
\emph{Step 1: common extensions.}
Lemma~\ref{lem:boundary-jets}, including its conclusion at the edges, gives
\begin{equation}\label{eq:flat-jets}
 D^\eta(\lambda_1-\lambda_2)=D^\eta(\mu_1-\mu_2)=0
 \quad\hbox{on }\partial\Omega,\qquad \eta\in\mathbb{N}_0^3,
\end{equation}
where $D^\eta$ denotes a Cartesian derivative.
Let $q=\lambda_2-\lambda_1$.
For each multi-index $\eta$, let $Q_\eta$ be the zero extension of $D^\eta q$ outside $\Omega$.
Put $Q=Q_0$.
Fix $p\in\partial\Omega$.
Taylor's formula in the ambient neighborhood and \eqref{eq:flat-jets} give
\[
 |Q_\eta(p+h)-Q_\eta(p)|\leq C_{p,\eta}|h|^2
 \quad\hbox{for sufficiently small }|h|.
\]
Thus $Q_\eta$ is continuous and differentiable at $p$, with derivative zero.
Away from $\partial\Omega$, differentiation follows from the definition.
Hence
\[
 \partial_iQ_\eta=Q_{\eta+e_i},\qquad i=1,2,3,
\]
where $e_i$ is the $i$th coordinate multi-index.
Each function on the right is continuous.
Induction yields
\[
 Q\in C_c^\infty(\mathbb{R}^3),\qquad D^\eta Q=Q_\eta.
\]
The argument uses ambient derivatives and also applies at the edges.

Choose a bounded neighborhood $U_0$ of $\overline\Omega$ on which
\[
 \mu_1\geq c_*/2,\qquad 3\lambda_1+2\mu_1\geq c_*/2.
\]
Take $\chi\in C_c^\infty(U_0)$ with $0\leq\chi\leq1$ and $\chi=1$ near $\overline\Omega$.
Choose a constant strongly convex pair $(\lambda_*,\mu_*)$.
Set
\[
 \begin{aligned}
 (\widetilde\lambda_1,\widetilde\mu_1)
 &=\chi(\lambda_1,\mu_1)+(1-\chi)(\lambda_*,\mu_*),\\
 (\widetilde\lambda_2,\widetilde\mu_2)
 &=(\widetilde\lambda_1+Q,\widetilde\mu_1),
 \end{aligned}
\]
where the products with $\chi$ are extended by zero outside $U_0$.
The construction gives
\[
 \begin{aligned}
 (\widetilde\lambda_j,\widetilde\mu_j)|_\Omega
   &=(\lambda_j,\mu_j),\qquad j=1,2,\\
 (\widetilde\lambda_2,\widetilde\mu_2)
   &=(\widetilde\lambda_1,\widetilde\mu_1)
      \quad\hbox{outside }\Omega.
 \end{aligned}
\]
The first pair is a convex combination of strongly convex pairs.
The matching relations give the same bounds for the second pair.
With $c_0=\min\{c_*/2,\mu_*,3\lambda_*+2\mu_*\}>0$, we obtain
\[
 \widetilde\mu_j\geq c_0,\qquad
 3\widetilde\lambda_j+2\widetilde\mu_j\geq c_0,
 \qquad j=1,2.
\]
Choose a ball $B$ such that $\overline\Omega\cup\operatorname{supp}\chi\Subset B$.
Both pairs equal $(\lambda_*,\mu_*)$ near $\partial B$.
Moreover,
\[
 \widetilde\mu_1=\widetilde\mu_2\quad\hbox{in }B.
\]
These extensions need not preserve axisymmetry, separation, or quasianalyticity.

\emph{Step 2: transfer of boundary data.}
To compare the boundary maps, fix $F\in H^{1/2}(\partial B)^3$.
Let $U_1\in H^1(B)^3$ solve the first extended equation with trace $F$.
Put $f=\operatorname{Tr}_{\partial\Omega}(U_1|_\Omega)$.
Extending tests in $H_0^1(\Omega)^3$ by zero shows that $U_1|_\Omega$ solves the first original equation.
Let $v_2$ solve the second original equation with trace $f$.
Set $w=v_2-U_1|_\Omega\in H_0^1(\Omega)^3$.
Let $E_0$ denote extension by zero, applied componentwise.
Approximation by compactly supported smooth functions gives
\[
 E_0w\in H_0^1(B)^3,\qquad
 \nabla(E_0w)=E_0(\nabla w).
\]
Define
\[
 U_2:=U_1+E_0w
 =\begin{cases}
   v_2,&\text{in }\Omega,\\
   U_1,&\text{in }B\setminus\overline\Omega
  \end{cases}
 \quad\in H^1(B)^3.
\]
Its weak gradient is the corresponding piecewise gradient.
Its outer trace is $F$.
For $\Phi\in H_0^1(B)^3$, put $\varphi=\operatorname{Tr}_{\partial\Omega}(\Phi|_\Omega)$.
Splitting the energies over $\Omega$ and its exterior and using equality of the original maps gives
\[
 \begin{aligned}
 \mathcal{B}_{\widetilde\lambda_2,\widetilde\mu_2}^{B}(U_2,\Phi)
 &=\mathcal{B}_{\widetilde\lambda_1,\widetilde\mu_1}^{B}(U_1,\Phi)
   +\left\langle
     (\Lambda_{\lambda_2,\mu_2}^{\Omega}
      -\Lambda_{\lambda_1,\mu_1}^{\Omega})f,\varphi
    \right\rangle\\
 &=0.
 \end{aligned}
\]
Hence $U_2$ is the second extended solution.
Given $H\in H^{1/2}(\partial B)^3$, choose $V_H\in H^1(B)^3$ with trace $H$.
Take a smooth cutoff $\zeta$ equal to zero near $\overline\Omega$ and to one near $\partial B$.
Then $W=\zeta V_H$ has trace $H$ and vanishes near $\overline\Omega$.
The solutions and coefficients agree on $\operatorname{supp}W$.
Therefore
\[
 \begin{aligned}
 \left\langle\Lambda_{\widetilde\lambda_2,\widetilde\mu_2}^{B}F,H\right\rangle
 &=\mathcal{B}_{\widetilde\lambda_2,\widetilde\mu_2}^{B}(U_2,W)\\
 &=\mathcal{B}_{\widetilde\lambda_1,\widetilde\mu_1}^{B}(U_1,W)
 =\left\langle\Lambda_{\widetilde\lambda_1,\widetilde\mu_1}^{B}F,H\right\rangle.
 \end{aligned}
\]
Since $F$ and $H$ are arbitrary,
\[
 \Lambda_{\widetilde\lambda_1,\widetilde\mu_1}^{B}
 =\Lambda_{\widetilde\lambda_2,\widetilde\mu_2}^{B}.
\]
No connectedness assumption on $B\setminus\overline\Omega$ is needed.

The proof is complete.
\end{proof}

The weighted estimate gives uniqueness of the first Lam\'e parameter when the shear modulus is known \cite{eskin2002inverse,eskin2003inverse}.

\begin{proposition}\label{prop:known-shear}
Let $\Omega$ be a bounded connected domain with $C^\infty$ boundary, or a product domain $G\times(0,L)$ with the geometry specified before Theorem~\ref{thm:separation}.
Let $(\lambda_j,\mu_j)$, $j=1,2$, be smooth in a common neighborhood of $\overline{\Omega}$ and satisfy \eqref{eq:convexity}.
If
\[
 \Lambda_{\lambda_1,\mu_1}^{\Omega}
 =\Lambda_{\lambda_2,\mu_2}^{\Omega}
 \quad\hbox{and}\quad
 \mu_1=\mu_2\quad\hbox{in }\Omega,
\]
then $\lambda_1=\lambda_2$ on $\overline{\Omega}$.
\end{proposition}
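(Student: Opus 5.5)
We reduce first to the ball, where the coefficients are constant near the boundary, and then run the Eskin--Ralston argument for the first parameter when the shear modulus is known. Lemma~\ref{lem:common-extension} gives a ball $B\Supset\overline\Omega$ and smooth strongly convex pairs $(\widetilde\lambda_j,\widetilde\mu)$ with these properties: the shear moduli coincide in $B$; both pairs equal the same constant pair near $\partial B$; $\widetilde\lambda_1=\widetilde\lambda_2$ in $B\setminus\Omega$; and $\Lambda^{B}_{\widetilde\lambda_1,\widetilde\mu}=\Lambda^{B}_{\widetilde\lambda_2,\widetilde\mu}$. It then suffices to show $\widetilde\lambda_1=\widetilde\lambda_2$ in $B$. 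Equality on $\overline\Omega$ follows by continuity.

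\emph{Step 1: integral identity.} For solutions $u_1,u_2\in H^1(B)^3$ of the two extended equations, the standard Alessandrini argument applies. Take $u_1$ with trace $f$, test against $u_2$, and use the symmetry of the energy form and equality of the maps. This gives
\[
 \int_B(\widetilde\lambda_1-\widetilde\lambda_2)\,(\operatorname{div}u_1)(\operatorname{div}u_2)\,dx=0 .
\]
The $\mu$-terms cancel exactly because $\widetilde\mu_1=\widetilde\mu_2$. The identity extends bilinearly to complex solutions.

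\emph{Step 2: complex geometrical optics.} We construct solutions whose divergences behave like $e^{i x\cdot\zeta_j}$ with $\zeta_1+\zeta_2=\xi$ fixed and $|\zeta_j|\to\infty$, $\zeta_j\cdot\zeta_j=0$. Following \cite{eskin2002inverse,eskin2003inverse}, we first reduce the Lam\'e system with smooth coefficients to a principally scalar (diagonal) second-order system. To do so we augment $u$ with the auxiliary scalar $\operatorname{div}u$ and use the substitution $u=\widetilde\mu^{-1/2}w+\dots$. The resulting operator has Laplacian principal part and first-order coupling terms. We then build solutions $e^{ix\cdot\zeta}(a_0+a_1/|\zeta|+r)$. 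Here the amplitude $a_0$ solves a transport ($\bar\partial$-type) system along the complex direction, and the remainder is controlled by the weighted $L^2$ Carleman/Faddeev estimate $\|r\|\lesssim|\zeta|^{-1}$. The key point is that the first-order coupling terms of the reduced system are \emph{not} small. This is exactly why Eskin--Ralston need smallness of $\nabla\mu$ when $\mu$ is unknown. When $\mu$ is common, however, only $\lambda$ is to be identified, and the known-$\mu$ part of the construction goes through without smallness. We will cite the corresponding statement as the weighted estimate mentioned before the proposition.

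\emph{Step 3: conclusion.} Insert the CGO solutions into the identity of Step~1 and let $|\zeta|\to\infty$. This gives $\int_B(\widetilde\lambda_1-\widetilde\lambda_2)\,e^{ix\cdot\xi}\,\Theta(x)\,dx=0$ for every $\xi$, where $\Theta$ is the product of the leading divergence amplitudes. That factor is a nonvanishing smooth function built from $\widetilde\mu$ and the transport solutions; we choose its free data so that it is independent of $\xi$, or else peel it off by a Fourier/analytic argument. Fourier uniqueness then yields $\widetilde\lambda_1=\widetilde\lambda_2$.

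The main obstacle is Step~2. We must verify that the leading amplitude of $\operatorname{div}u$ is nonvanishing and that the transport equations can be solved without smallness when only $\lambda$ differs. If a direct citation is insufficient, we would first try to keep the transport equations triangular in the unknown $\lambda$: the $\mu$-coupling depends only on the common $\widetilde\mu$, so it can be absorbed into an invertible matrix-valued amplitude via a global $\bar\partial$ solution.
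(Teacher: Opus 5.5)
Your reduction to the ball via Lemma~\ref{lem:common-extension} and your integral identity in Step~1 are correct. The gap is in Step~3, where $\widetilde\lambda_1=\widetilde\lambda_2$ is supposed to follow. The limit is not of the form $\int_B(\widetilde\lambda_1-\widetilde\lambda_2)e^{ix\cdot\xi}\Theta\,dx=0$ with a harmless factor $\Theta$.

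Take the Ikehata-type substitution you invoke, $u=\widetilde\mu^{-1/2}w+\widetilde\mu^{-1}\nabla f-f\nabla\widetilde\mu^{-1}$. Cancelling the gradient terms yields the auxiliary equation $\Delta f=-\frac{\widetilde\mu(\widetilde\lambda+\widetilde\mu)}{\widetilde\lambda+2\widetilde\mu}\operatorname{div}(\widetilde\mu^{-1/2}w)+\cdots$. The $w$-equation also carries first-order terms in $f$ with coefficients built from $\nabla^2\widetilde\mu^{-1}$, and these are not small. The leading amplitudes of the two candidates therefore solve different coupled transport ($\bar\partial$-type) systems. These systems depend on $\widetilde\lambda_1$ and $\widetilde\lambda_2$, and on the limiting complex directions, which lie in $\xi^\perp$.

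Consequently $\Theta$ depends on $\xi$ and on the unknowns, and it cannot be chosen independent of $\xi$. Fourier inversion does not apply. Neither does the $\bar\partial$ argument that removes such factors when the gauge equation is linear in the coefficient difference. Your proposed remedy assumes the coupling involves only $\widetilde\mu$, which is false for the scalar row. Nonvanishing of $\Theta$ is also left open, as you acknowledge.

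This nonlinear step is exactly what the weighted estimate handles, but you have misread what that estimate is. It is not a CGO remainder bound feeding into Step~2. It is the final coefficient inequality of \cite[Theorem~2]{eskin2002inverse}, valid without any smallness of $\nabla\widetilde\mu$:
\[
 \|\widetilde\lambda_1+\widetilde\mu_1-\widetilde\lambda_2-\widetilde\mu_2\|_\tau\leq\frac C\tau\|\widetilde\mu_1-\widetilde\mu_2\|_\tau,
 \qquad \|q\|_\tau^2=\int_B|q|^2e^{2\tau|x|^2}\,dx,\quad \tau>\tau_0 .
\]
The paper's proof applies this inequality directly to the extensions from Lemma~\ref{lem:common-extension}. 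Those extensions are constant near $\partial B$. By strong convexity they satisfy $\widetilde\mu>0$, $\widetilde\lambda+\widetilde\mu>0$, and $\widetilde\lambda+2\widetilde\mu>0$. With a common shear modulus the right-hand side vanishes, so $\widetilde\lambda_1=\widetilde\lambda_2$ a.e.\ in $B$, and continuity gives equality on $\overline\Omega$.

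If you cite the estimate this way, your Steps~1--3 become unnecessary. Without it, Step~3 remains unproved.
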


\begin{proof}
Lemma~\ref{lem:common-extension} gives smooth strongly convex extensions to a ball $B$ with a common shear modulus and equal full boundary maps.
Both extended pairs equal the same constant pair near $\partial B$.
For either extended pair, strong convexity gives
\[
 \widetilde\lambda+\widetilde\mu
 =\frac{3\widetilde\lambda+2\widetilde\mu+\widetilde\mu}{3}>0,
 \qquad
 \widetilde\lambda+2\widetilde\mu
 =\frac{3\widetilde\lambda+2\widetilde\mu+4\widetilde\mu}{3}>0.
\]
Together with $\widetilde\mu>0$, these are the positivity conditions required by the elasticity reduction.
For these extensions, \cite[Theorem~2]{eskin2002inverse} gives constants $C,\tau_0>0$ such that
\begin{equation}\label{eq:er-estimate}
 \|\widetilde\lambda_1+\widetilde\mu_1
        -\widetilde\lambda_2-\widetilde\mu_2\|_\tau
 \leq\frac C\tau\|\widetilde\mu_1-\widetilde\mu_2\|_\tau,
 \qquad \tau>\tau_0,
\end{equation}
where $C$ is independent of $\tau$.
Both $C$ and $\tau_0$ may depend on the extended coefficients and $B$.
The weighted norm is
\[
 \|q\|_\tau^2=\int_B |q(x)|^2 e^{2\tau|x|^2}\,dx.
\]
Estimate \eqref{eq:er-estimate} requires no smallness of $\nabla\widetilde\mu$.
The gradient restriction in \cite[Theorem~1]{eskin2002inverse} belongs to a separate perturbative uniqueness result.

Fix $\tau>\tau_0$.
Using the common shear modulus in \eqref{eq:er-estimate}, we obtain
\[
 \|\widetilde\lambda_1-\widetilde\lambda_2\|_\tau
 \leq\frac C\tau
       \|\widetilde\mu_1-\widetilde\mu_2\|_\tau=0.
\]
Positivity of the weight gives equality almost everywhere in $B$.
Smoothness then yields $\lambda_1=\lambda_2$ on $\overline{\Omega}$.

The proof is complete.
\end{proof}

\section{Axisymmetric shear moduli}\label{sec:proof-axis}
Torsional measurements reduce the recovery of an axisymmetric shear modulus to a planar conductivity problem.
We use this reduction to prove Theorem~\ref{thm:axis}.

\subsection{Planar conductivity uniqueness}\label{subsec:planar-conductivity}
The following proposition gives the planar uniqueness results needed to determine the shear modulus in Theorem~\ref{thm:axis}.
Proposition~\ref{prop:torsion} connects the elastic measurements with conductivity data.

Let $D\subset\mathbb{R}^2$ be a bounded connected domain with $C^\infty$ boundary.
For a real conductivity $\gamma\in L^\infty(D)$ satisfying $0<c_\gamma\leq\gamma\leq C_\gamma<\infty$ almost everywhere, let $v_f\in H^1(D)$ solve
\[
 \operatorname{div}(\gamma\nabla v_f)=0\quad\hbox{in }D,
 \qquad v_f|_{\partial D}=f\in H^{1/2}(\partial D).
\]
The scalar Dirichlet-to-Neumann map $\Lambda^c_\gamma:H^{1/2}(\partial D)\to H^{-1/2}(\partial D)$ is defined by
\[
 \left\langle \Lambda^c_\gamma f,g\right\rangle
 =\int_D\gamma\nabla v_f\cdot\nabla w_g\,dx,
 \qquad f,g\in H^{1/2}(\partial D),
\]
where $w_g\in H^1(D)$ is any extension of $g$.

\begin{proposition}\label{prop:planar-uniqueness}
Let $D$ be as above and let $\gamma_j\in L^\infty(D)$, $j=1,2$, be real valued, with
\[
 0<c\leq\gamma_j\leq C<\infty
 \quad\hbox{almost everywhere in }D.
\]
\begin{enumerate}
\item If $D$ is simply connected, then
\[
 \Lambda^c_{\gamma_1}=\Lambda^c_{\gamma_2}
 \quad\Longrightarrow\quad
 \gamma_1=\gamma_2\quad\hbox{almost everywhere in }D.
\]
\item Suppose that $\gamma_j\in C^\infty(\overline D)$ and let $\widetilde\Gamma\subset\partial D$ be any nonempty relatively open arc.
If
\[
 \left\langle(\Lambda^c_{\gamma_1}-\Lambda^c_{\gamma_2})f,g\right\rangle=0
 \quad\hbox{for all }f,g\in\mathcal{X}_{\widetilde\Gamma}(\partial D),
\]
then $\gamma_1=\gamma_2$ on $\overline D$.
No simple connectedness assumption is required in this part.
\end{enumerate}
\end{proposition}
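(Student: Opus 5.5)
This proposition collects two known planar uniqueness results, so the plan is to reduce each part to the appropriate theorem in the literature and check that its hypotheses hold in the present setting.

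For part (i), the plan is to invoke the Astala--P\"aiv\"arinta theorem: in two dimensions, an $L^\infty$ conductivity bounded above and below is determined by its Dirichlet-to-Neumann map on a bounded simply connected domain. The only things to check are conventions. The bilinear pairing defined above for real $f,g$ coincides with the weak Dirichlet-to-Neumann map used there, since the pairing is independent of the extension $w_g$. The $C^\infty$ boundary of $D$ is more than the regularity Astala--P\"aiv\"arinta need.

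If one prefers to reduce to the unit disk, I would argue as follows. Simple connectedness and a smooth boundary give a Riemann map $\Phi$ from the disk onto $D$ that extends to a diffeomorphism of the closures. Because $\Phi$ is conformal, the pullback of a scalar isotropic conductivity is the isotropic conductivity $\gamma\circ\Phi$; the Jacobian factor cancels in two dimensions. Equality of the maps on $\partial D$ then transfers to equality on the circle, because $H^{1/2}$ traces are preserved under boundary diffeomorphisms. Applying the disk version of the theorem and pulling back gives $\gamma_1=\gamma_2$ almost everywhere.

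For part (ii), the coefficients are smooth and the data are partial, so I would argue in two steps.

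\emph{Step 1: boundary determination.} The classical Kohn--Vogelius boundary determination applies on the smooth arc $\widetilde\Gamma$. It shows that the localized map, which is a pseudodifferential operator of order one, determines the full Taylor jet of $\gamma$ at every point of $\widetilde\Gamma$. The localization through cutoffs in $\mathcal{X}_{\widetilde\Gamma}$ is the same as in Step~1 of Lemma~\ref{lem:boundary-jets}.

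\emph{Step 2: interior uniqueness.} Here I would use the Liouville substitution $v=\gamma^{1/2}u$, which turns $\operatorname{div}(\gamma\nabla u)=0$ into the Schr\"odinger equation $(-\Delta+q)v=0$ with $q=\Delta\gamma^{1/2}/\gamma^{1/2}$. The boundary jets from Step~1 agree on $\widetilde\Gamma$. Consequently the Cauchy data of the two Schr\"odinger problems agree on $\widetilde\Gamma$, and the Dirichlet-to-Neumann maps correspond correctly because both $\gamma^{1/2}$ and $\partial_\nu\gamma^{1/2}$ coincide there.

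I would then apply the partial-data uniqueness theorem of Imanuvilov--Uhlmann--Yamamoto for planar Schr\"odinger operators. It holds on arbitrary bounded smooth planar domains, not necessarily simply connected, with data on an arbitrary open subset of the boundary. It gives $q_1=q_2$ in $D$. The difference $w=\gamma_1^{1/2}-\gamma_2^{1/2}$ then satisfies the identity
\[
\Delta w=q_1w,
\]
because $\Delta\gamma_j^{1/2}=q_j\gamma_j^{1/2}$ and the potentials agree. Moreover $w$ and $\partial_\nu w$ vanish on $\widetilde\Gamma$. Unique continuation from Cauchy data on a boundary arc, combined with the connectedness of $D$, gives $w\equiv0$. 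Smoothness then extends the equality to $\overline D$.

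The main obstacle is matching the data formulation in part (ii). The localized form used here prescribes Dirichlet data supported in $\widetilde\Gamma$ and measures only on $\widetilde\Gamma$. This has to be reconciled with the hypothesis used by Imanuvilov--Uhlmann--Yamamoto, namely Cauchy data with the Dirichlet part supported in $\widetilde\Gamma$ and the Neumann part observed on $\widetilde\Gamma$. Care is also needed because the Liouville transform mixes in a multiplication by $\partial_\nu\log\gamma^{1/2}$ on the boundary. Step~1 is what controls that term, so it must be established before the transformed maps are compared.
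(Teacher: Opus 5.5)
Your proposal is correct and follows the paper's route. Part (i) is exactly the appeal to Astala--P\"aiv\"arinta. In part (ii) the paper cites the conductivity corollary of Imanuvilov--Uhlmann--Yamamoto (Corollary~1.1) directly, and your chain of boundary determination, Liouville substitution, their Schr\"odinger theorem, and unique continuation for $\gamma_1^{1/2}-\gamma_2^{1/2}$ essentially re-derives that corollary. The data-matching step you flag as the main obstacle is settled in the paper by a short density argument: mollification puts every trace with $\operatorname{supp} f\Subset\widetilde\Gamma$ into $\mathcal{X}_{\widetilde\Gamma}(\partial D)$, and pairing against all $g\in C_c^\infty(\widetilde\Gamma)$ then gives $(\Lambda^c_{\gamma_1}f)|_{\widetilde\Gamma}=(\Lambda^c_{\gamma_2}f)|_{\widetilde\Gamma}$ in $\mathcal{D}'(\widetilde\Gamma)$.
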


\begin{proof}
Part~(i) is the planar conductivity uniqueness theorem in \cite[Theorem~1]{astala2006calderon}.
For part~(ii), take $f\in H^{1/2}(\partial D)$ with $\operatorname{supp}f\Subset\widetilde\Gamma$.
Boundary mollification gives $f_n\in C_c^\infty(\widetilde\Gamma)$ with $f_n\to f$ in $H^{1/2}(\partial D)$.
The assumed pairing identity and boundedness of the scalar maps imply
\[
 (\Lambda^c_{\gamma_1}f)|_{\widetilde\Gamma}
 =(\Lambda^c_{\gamma_2}f)|_{\widetilde\Gamma}
 \quad\hbox{in }\mathcal{D}'(\widetilde\Gamma).
\]
The uniqueness result for inputs and measurements on the same arc \cite[Corollary~1.1]{imanuvilov2010calderon} therefore gives $\gamma_1=\gamma_2$ in $D$, and continuity extends the equality to $\overline D$.
\end{proof}

\Needspace{16\baselineskip}
\subsection{The exact torsional reduction}\label{subsec:torsional-reduction}
For bounded measurable coefficients, the torsional lift must satisfy the three-dimensional weak equation against arbitrary vector tests.
The angular average of these tests reduces the elastic form to the planar conductivity form.
The next proposition gives this reduction and the resulting boundary pairing.

\begin{proposition}\label{prop:torsion}
Let $D$ and $\Omega_D$ be as in Theorem~\ref{thm:axis}.
Let $m\in L^\infty(D)$ be bounded below by a positive constant, set $\mu(r,\theta,z)=m(r,z)$, and let $\lambda\in L^\infty(\Omega_D)$ satisfy \eqref{eq:convexity} together with $\mu$.
Define
\[
 \gamma(r,z)=r^3m(r,z).
\]
The map $T:f\mapsto rf e_\theta$ is bounded from $H^{1/2}(\partial D)$ to $H^{1/2}(\partial\Omega_D)^3$, and maps $\mathcal{X}_{\widetilde\Gamma}(\partial D)$ into $\mathcal{X}_\Gamma(\partial\Omega_D)^3$.
If $v_f$ is the scalar conductivity solution with coefficient $\gamma$ and trace $f$, then $u_{Tf}=rv_f e_\theta$ is the weak elastic solution with trace $Tf$.
Moreover,
\begin{equation}\label{eq:torsion-dn}
 \left\langle \Lambda_{\lambda,\mu}^{\Omega_D}Tf,Tg\right\rangle
 =2\pi\left\langle \Lambda^c_\gamma f,g\right\rangle,
 \qquad f,g\in H^{1/2}(\partial D).
\end{equation}
\end{proposition}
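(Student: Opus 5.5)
We prove Proposition~\ref{prop:torsion} in four steps: boundedness of $T$, a pointwise computation of the strain of a torsional field, an angular-averaging argument that reduces the three-dimensional weak equation to the planar one, and finally the pairing identity \eqref{eq:torsion-dn}.

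\emph{Boundedness of $T$.} Since $\overline D$ lies in $\{r_-\le r\le r_+\}$, the map $\Phi:(r,\theta,z)\mapsto(r\cos\theta,r\sin\theta,z)$ is a smooth local diffeomorphism near $\overline D\times\mathbb{R}$. It is $2\pi$-periodic in $\theta$ and identifies $\partial\Omega_D$ smoothly with $\partial D\times S^1$. In these coordinates $Tf$ is the product of the $\theta$-independent function $rf$ with the smooth vector field $e_\theta$. We prove boundedness on $H^1(\Omega_D)$ first. Take an $H^1(D)$ extension $F$ of $f$; then $rFe_\theta\in H^1(\Omega_D)^3$ with norm at most $C\|F\|_{H^1(D)}$, because the volume element $r\,dr\,d\theta\,dz$ is comparable to $dr\,dz\,d\theta$. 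Applying the trace theorem and the bounded right inverse of the trace gives boundedness $H^{1/2}(\partial D)\to H^{1/2}(\partial\Omega_D)^3$. If $f\in C_c^\infty(\widetilde\Gamma)$, then $Tf$ is smooth with support in the rotated surface $\Gamma$. Continuity of $T$ then carries the closure $\mathcal{X}_{\widetilde\Gamma}$ into $\mathcal{X}_\Gamma^3$.

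\emph{Strain of a torsional field.} For $v\in H^1(D)$ set $U=rve_\theta$. In cylindrical components the only nonzero strain entries are
\[
 \varepsilon_{r\theta}=\tfrac12 r\,\partial_r v,
 \qquad
 \varepsilon_{\theta z}=\tfrac12 r\,\partial_z v,
\]
because the term $-U_\theta/r$ cancels the derivative of the factor $r$. Consequently $\operatorname{div}U=0$, and
\[
 2\varepsilon(U):\varepsilon(W)=r^2\nabla_{(r,z)}v\cdot\nabla_{(r,z)}w
\]
for any second torsional field $W=rwe_\theta$. Integrating against $r\,dr\,d\theta\,dz$ gives
\[
 \mathcal{B}_{\lambda,\mu}(U,W)=2\pi\int_D r^3m\,\nabla v\cdot\nabla w\,dr\,dz .
\]
The parameter $\lambda$ drops out because $\operatorname{div}U=0$ pointwise. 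The computation is first done for smooth $v,w$ and then extended by density.

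\emph{Main obstacle: arbitrary vector tests.} We must show $\mathcal{B}(U,\Phi)=0$ for every $\Phi\in H_0^1(\Omega_D)^3$, not only for torsional $\Phi$, and $\lambda$ may depend on $\theta$. The first point to check is that $\lambda$ does no harm: $\lambda\operatorname{div}U\operatorname{div}\Phi=0$ pointwise, so the $\lambda$-term vanishes for every $\Phi$. For the $\mu$-term, $\varepsilon(U)$ has only $r\theta$ and $\theta z$ entries, so only these components of $\varepsilon(\Phi)$ enter. In the orthonormal cylindrical frame,
\[
 2\varepsilon_{r\theta}(\Phi)=\partial_r\Phi_\theta-\frac{\Phi_\theta}{r}+\frac1r\partial_\theta\Phi_r,
 \qquad
 2\varepsilon_{\theta z}(\Phi)=\partial_z\Phi_\theta+\frac1r\partial_\theta\Phi_z .
\]
Neither $m$ nor $v$ depends on $\theta$, so integrating in $\theta$ first kills the $\partial_\theta$ terms. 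What survives depends only on the angular average $\bar\phi(r,z)=\frac1{2\pi}\int_0^{2\pi}\Phi_\theta\,d\theta$. Writing $\bar\phi=r\psi$, the integrand becomes $2\pi r^3 m\nabla v\cdot\nabla\psi$, and $\psi\in H_0^1(D)$ since $r\ge r_->0$. Hence
\[
 \mathcal{B}(U,\Phi)=2\pi\int_D\gamma\nabla v\cdot\nabla\psi\,dr\,dz,
\]
which vanishes when $v=v_f$ solves the conductivity equation. Together with the trace $U|_{\partial\Omega_D}=Tf$ and uniqueness of the Lax--Milgram solution, this gives $u_{Tf}=rv_fe_\theta$.

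\emph{Pairing identity.} In the weak definition \eqref{eq:dn} we choose the extension $V_{Tg}=rw_ge_\theta$ of $Tg$. The strain computation then gives
\[
 \langle\Lambda_{\lambda,\mu}^{\Omega_D}Tf,Tg\rangle=\mathcal{B}(u_{Tf},V_{Tg})=2\pi\langle\Lambda^c_\gamma f,g\rangle,
\]
which is \eqref{eq:torsion-dn}.
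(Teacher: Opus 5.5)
Your proposal is correct and follows the paper's proof essentially step for step. It uses the same lift $v\mapsto rve_\theta$ with the trace theorem for $T$, the same strain computation giving $\operatorname{div}U=0$ (which removes the possibly $\theta$-dependent $\lambda$), the same angular average of $\Phi_\theta/r$ to reduce arbitrary vector tests to the planar conductivity form (your $\psi$ is the paper's $P\Phi$), and the same torsional extension of $Tg$ for the pairing. The density step you leave implicit is harmless: it suffices to test against $\Phi\in C_c^\infty(\Omega_D)^3$, for which $\psi\in C_c^\infty(D)$, and then use continuity of $\mathcal{B}(U,\cdot)$ on $H_0^1(\Omega_D)^3$.
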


\begin{proof}
\emph{Step 1: the lift and its boundary trace.}
The angle $\theta$ is understood modulo $2\pi$.
The cylindrical map identifies $D\times S^1$ with $\Omega_D$.
Write $e_r=(\cos\theta,\sin\theta,0)$ and $e_z=(0,0,1)$.
For a smooth scalar function $v$, set $Ev=rv e_\theta$.
Direct differentiation gives
\[
 \partial_r(Ev)=(v+r\partial_rv)e_\theta,\qquad
 \partial_z(Ev)=r\partial_zv\,e_\theta,\qquad
 r^{-1}\partial_\theta(Ev)=-v e_r.
\]
Consequently, for the standard $H^1$ norm,
\[
 \begin{aligned}
 \|Ev\|_{H^1(\Omega_D)^3}^2
 =2\pi\int_D r\bigl(&r^2|v|^2+|v+r\partial_rv|^2\\
                    &+r^2|\partial_zv|^2+|v|^2\bigr)\,dr\,dz
 \leq C_D\|v\|_{H^1(D)}^2.
 \end{aligned}
\]
The bounds $r_-\leq r\leq r_+$ justify the inequality.
Density extends $E$ to a bounded map from $H^1(D)$ to $H^1(\Omega_D)^3$.
Since $E(C_c^\infty(D))\subset C_c^\infty(\Omega_D)^3$, density also gives
\[
 E\bigl(H_0^1(D)\bigr)\subset H_0^1(\Omega_D)^3.
\]
Let $R_D:H^{1/2}(\partial D)\to H^1(D)$ be a bounded trace right inverse.
Define $Tf=\operatorname{Tr}_{\partial\Omega_D}(E R_Df)$.
The trace theorem and the bound for $E$ give
\[
 \|Tf\|_{H^{1/2}(\partial\Omega_D)^3}
 \leq C_D\|R_Df\|_{H^1(D)}
 \leq C_D'\|f\|_{H^{1/2}(\partial D)}.
\]
For any two trace right inverses $R_D^{(1)}$ and $R_D^{(2)}$,
\[
 (R_D^{(1)}-R_D^{(2)})f\in H_0^1(D),
 \qquad
 \operatorname{Tr}_{\partial\Omega_D}
       E(R_D^{(1)}-R_D^{(2)})f=0.
\]
Thus $T$ is independent of the chosen right inverse.
For smooth $f$, one has $Tf=rf e_\theta$.
Density gives the stated formula for every $f\in H^{1/2}(\partial D)$.
For $v\in H^1(D)$, the difference $v-R_D\operatorname{Tr}_{\partial D}v$ belongs to $H_0^1(D)$.
Hence
\[
 \operatorname{Tr}_{\partial\Omega_D}(Ev)
       =T\operatorname{Tr}_{\partial D}v,\qquad v\in H^1(D).
\]
The support of a smooth lifted trace lies in the corresponding rotation band:
\[
 T\bigl(C_c^\infty(\widetilde\Gamma)\bigr)
 \subset C_c^\infty(\Gamma)^3.
\]
The bound for $T$ and the definition of the localized trace spaces give
\[
 T\bigl(\mathcal{X}_{\widetilde\Gamma}(\partial D)\bigr)
 \subset\mathcal{X}_{\Gamma}(\partial\Omega_D)^3.
\]

\emph{Step 2: strain components and the averaged test.}
Let $v\in H^1(D)$ and put $u=rv e_\theta$.
For smooth $v$, the components in the orthonormal cylindrical frame $(e_r,e_\theta,e_z)$ are $u_r=u_z=0$ and $u_\theta=rv$.
Consequently
\[
 \operatorname{div} u=0,\qquad
 \varepsilon_{r\theta}(u)=\varepsilon_{\theta r}(u)=\frac r2\partial_rv,
 \qquad
 \varepsilon_{z\theta}(u)=\varepsilon_{\theta z}(u)=\frac r2\partial_zv,
\]
and all other strain components vanish.
Approximation of $v$ by smooth functions in $H^1(D)$ proves the same statements for weak derivatives.
In particular, the term containing $\lambda$ vanishes even if $\lambda$ depends on $\theta$.

Take an arbitrary vector test field $\Phi\in C_c^\infty(\Omega_D)^3$.
Write its cylindrical components as $\Phi_r,\Phi_\theta,\Phi_z$.
Define
\[
 P\Phi(r,z)=\frac1{2\pi}\int_0^{2\pi}
             \frac{\Phi_\theta(r,\theta,z)}{r}\,d\theta.
\]
For a smooth test field,
\[
 \begin{aligned}
 \partial_r(P\Phi)
 &=\frac1{2\pi}\int_0^{2\pi}
       \left(\frac{\partial_r\Phi_\theta}{r}
                         -\frac{\Phi_\theta}{r^2}\right)d\theta,\\
 \partial_z(P\Phi)
 &=\frac1{2\pi}\int_0^{2\pi}
                 \frac{\partial_z\Phi_\theta}{r}\,d\theta.
 \end{aligned}
\]
Using Cauchy--Schwarz, $dx=r\,dr\,d\theta\,dz$, and the bounds on $r$, we obtain
\[
 \|P\Phi\|_{H^1(D)}\leq C_D\|\Phi\|_{H^1(\Omega_D)^3}.
\]
The projection of $\operatorname{supp}\Phi$ is compact in $D$.
Thus $P\Phi\in C_c^\infty(D)$.
Hence $P$ extends by density to a bounded map from $H_0^1(\Omega_D)^3$ to $H_0^1(D)$.

\emph{Step 3: the equation against arbitrary vector tests.}
The entries of the general strain tensor that pair with those of $u$ are
\[
 \begin{aligned}
 \varepsilon_{r\theta}(\Phi)
 &=\frac12\left(\partial_r\Phi_\theta-\frac{\Phi_\theta}{r}
                           +\frac1r\partial_\theta\Phi_r\right),\\
 \varepsilon_{z\theta}(\Phi)
 &=\frac12\left(\partial_z\Phi_\theta
                           +\frac1r\partial_\theta\Phi_z\right).
 \end{aligned}
\]
Each off-diagonal strain entry occurs twice in the Frobenius product.
Using the volume element $dx=r\,dr\,d\theta\,dz$ in \eqref{eq:energy} gives
\[
 \begin{aligned}
 \mathcal{B}_{\lambda,\mu}^{\Omega_D}(u,\Phi)
 =\int_D\int_0^{2\pi} mr^2\biggl[
  &\partial_rv\left(\partial_r\Phi_\theta-\frac{\Phi_\theta}{r}
                        +\frac1r\partial_\theta\Phi_r\right)\\
  {}+{}&\partial_zv\left(\partial_z\Phi_\theta
                        +\frac1r\partial_\theta\Phi_z\right)
 \biggr]d\theta\,dr\,dz.
 \end{aligned}
\]
Since $m\in L^\infty(D)$, $v\in H^1(D)$, and $\Phi$ is smooth with compact support, Fubini's theorem applies.
Both $m$ and $v$ are independent of $\theta$.
The terms containing $\partial_\theta\Phi_r$ and $\partial_\theta\Phi_z$ therefore integrate to zero by periodicity.
For the remaining terms,
\[
 \partial_r\Phi_\theta-\frac{\Phi_\theta}{r}
   =r\partial_r\left(\frac{\Phi_\theta}{r}\right),
 \qquad
 \partial_z\Phi_\theta
   =r\partial_z\left(\frac{\Phi_\theta}{r}\right).
\]
Hence
\[
 \mathcal{B}_{\lambda,\mu}^{\Omega_D}(rv e_\theta,\Phi)
 =2\pi\int_D\gamma\nabla v\cdot\nabla(P\Phi)\,dr\,dz.
\]
Keep $m$ fixed.
The bounds for $E$ and $P$, together with boundedness of the coefficients, give continuity on $H^1(D)\times H_0^1(\Omega_D)^3$.
Density extends the identity to this space.
For $v=v_f$, the scalar weak equation and Step~1 give
\[
 \begin{aligned}
 \mathcal{B}_{\lambda,\mu}^{\Omega_D}(Ev_f,\Phi)
 &=2\pi\int_D\gamma\nabla v_f\cdot\nabla(P\Phi)\,dr\,dz=0,\\
 \operatorname{Tr}_{\partial\Omega_D}(Ev_f)&=Tf.
 \end{aligned}
\]
Here $\Phi\in H_0^1(\Omega_D)^3$ is arbitrary, and $P\Phi\in H_0^1(D)$ justifies the vanishing integral.
Uniqueness of the Dirichlet problem gives $Ev_f=u_{Tf}$.

\emph{Step 4: the boundary pairing.}
Let $w\in H^1(D)$ have trace $g$.
By Step~1,
\[
 W=Ew=rw e_\theta\in H^1(\Omega_D)^3,
 \qquad \operatorname{Tr}_{\partial\Omega_D}W=Tg.
\]
The displayed strain formulas give
\[
 2m\varepsilon(rv_f e_\theta):\varepsilon(rw e_\theta)
 =mr^2\nabla v_f\cdot\nabla w.
\]
Using \eqref{eq:dn} and integrating over $\Omega_D$, we obtain
\[
 \begin{aligned}
 \left\langle\Lambda_{\lambda,\mu}^{\Omega_D}Tf,Tg\right\rangle
 &=\mathcal{B}_{\lambda,\mu}^{\Omega_D}(Ev_f,Ew)\\
 &=2\pi\int_D\gamma\nabla v_f\cdot\nabla w\,dr\,dz
 =2\pi\left\langle\Lambda^c_\gamma f,g\right\rangle.
 \end{aligned}
\]
This proves \eqref{eq:torsion-dn}.

The proof is complete.
\end{proof}

\subsection{Completion of the proof}
\begin{proof}[Proof of Theorem~\ref{thm:axis}]
For each candidate define $\gamma_j=r^3m_j$.
In part~(i), the radial bounds and the positive lower and upper bounds of $m_j$ give constants $c,C>0$ such that
\[
 c r_-^3\leq\gamma_j\leq C r_+^3
 \quad\hbox{almost everywhere in }D.
\]
Thus $\gamma_j$ are uniformly positive bounded measurable planar conductivities.
For these bounded strongly convex elastic pairs, Proposition~\ref{prop:torsion} gives
\[
 2\pi\left\langle(\Lambda^c_{\gamma_1}-\Lambda^c_{\gamma_2})f,g\right\rangle
 =\mathcal{T}_{\lambda_1,\mu_1}(f,g)
  -\mathcal{T}_{\lambda_2,\mu_2}(f,g)=0
\]
for every $f,g\in H^{1/2}(\partial D)$.
Since both scalar traces are arbitrary, the full conductivity maps agree as operators from $H^{1/2}(\partial D)$ to $H^{-1/2}(\partial D)$.
Since $D$ is bounded, smooth, and simply connected, Proposition~\ref{prop:planar-uniqueness}(i) gives $\gamma_1=\gamma_2$ almost everywhere in $D$.
Dividing by the known positive factor $r^3$ gives $m_1=m_2$ almost everywhere in $D$.
Since $dx=r\,dr\,d\theta\,dz$, it follows that $\mu_1=\mu_2$ almost everywhere in $\Omega_D$.

For fixed $m$, the scalar coefficient $\gamma=r^3m$ and its boundary map are independent of $\lambda$.
Thus \eqref{eq:torsion-dn} shows that the entire torsional form is independent of $\lambda$.
In particular, the admissible choices $\lambda=0$ and $\lambda=1$ give the same torsional data.
This proves both the recovery of $\mu$ and the stated nonuniqueness of $\lambda$ in part~(i).

In part~(ii), smoothness of $m_j$ and the radial bounds give $\gamma_j\in C^\infty(\overline D)$ with uniform positive lower and upper bounds.
Proposition~\ref{prop:torsion} maps traces supported on $\widetilde\Gamma$ to elastic traces supported on its rotated surface $\Gamma$.
For every $f,g\in\mathcal{X}_{\widetilde\Gamma}(\partial D)$, the same identity turns equality of the local torsional forms into
\[
 \left\langle (\Lambda^c_{\gamma_1}-\Lambda^c_{\gamma_2})f,g\right\rangle=0.
\]
These pairings give the local conductivity data on the nonempty arc $\widetilde\Gamma$.
Proposition~\ref{prop:planar-uniqueness}(ii) therefore yields $\gamma_1=\gamma_2$ on $\overline D$.
This application does not require $D$ to be simply connected.
Division by $r^3$ and continuity give $\mu_1=\mu_2$ on $\overline{\Omega_D}$.

To prove the simultaneous recovery assertion in part~(ii), suppose now that the full elastic maps agree.
Restriction to lifted traces gives
\[
 \begin{aligned}
 \mathcal{T}_{\lambda_1,\mu_1}^{\widetilde\Gamma}(f,g)
 &=\left\langle\Lambda_{\lambda_1,\mu_1}^{\Omega_D}Tf,Tg\right\rangle\\
 &=\left\langle\Lambda_{\lambda_2,\mu_2}^{\Omega_D}Tf,Tg\right\rangle
 =\mathcal{T}_{\lambda_2,\mu_2}^{\widetilde\Gamma}(f,g)
 \end{aligned}
\]
for all $f,g\in\mathcal{X}_{\widetilde\Gamma}(\partial D)$.
The local conclusion just proved gives $\mu_1=\mu_2$ on $\overline{\Omega_D}$.
Since $\overline D\subset\{r>0\}$, the cylindrical parametrization gives a bounded connected smooth domain $\Omega_D$.
Moreover, the lifts $\mu_j(x)=m_j(\sqrt{x_1^2+x_2^2},x_3)$ are smooth near $\overline{\Omega_D}$.
Intersecting these neighborhoods with those of $\lambda_j$ gives a common neighborhood on which both pairs are smooth.
The two smooth strongly convex pairs therefore have a common shear modulus and equal full maps on $\Omega_D$.
Proposition~\ref{prop:known-shear} gives $\lambda_1=\lambda_2$ on $\overline{\Omega_D}$, proving part~(ii).
No axisymmetry of $\lambda_j$ is used in either the torsional reduction or the final recovery step.
\end{proof}

\section{Separation of variables}\label{sec:proof-separation}
In this section, we prove Theorem~\ref{thm:separation} by exploiting the separated dependence of the shear modulus on the transverse and axial variables.
Boundary determination on the bottom and lateral faces provides complementary information about this dependence.
For both multiplicative and additive separation, these boundary values determine $\mu$ throughout the domain, without prior knowledge of the individual factors or summands.
Once $\mu$ is known, the full elastic boundary map determines $\lambda$.
We treat the two representations separately in Sections~\ref{subsec:separation-multiplicative} and~\ref{subsec:separation-additive}, and complete the recovery of both parameters in Section~\ref{subsec:separation-lambda}.

Let $(\lambda_j,\mu_j)$, $j=1,2$, be the two coefficient pairs in Theorem~\ref{thm:separation}, and suppose that their localized maps on $\Gamma=\Gamma_0\cup\Gamma_\Sigma$ agree.
We first derive the boundary identities used in both cases.
For $W=\Gamma_0$ or $W=\Gamma_\Sigma$, the trace spaces defined in Section~\ref{subsec:weak-formulation} satisfy
\[
 \mathcal{X}_W(\partial\Omega)^3\subset\mathcal{X}_\Gamma(\partial\Omega)^3.
\]
Indeed, $C_c^\infty(W)\subset C_c^\infty(\Gamma)$, and both closures are taken in $H^{1/2}(\partial\Omega)$.
By the definition of the localized maps, for all $f,g\in\mathcal{X}_W(\partial\Omega)^3$,
\[
 \left\langle(\Lambda_{\lambda_1,\mu_1}^{W}
             -\Lambda_{\lambda_2,\mu_2}^{W})f,g\right\rangle
 =\left\langle(\Lambda_{\lambda_1,\mu_1}^{\Gamma}
             -\Lambda_{\lambda_2,\mu_2}^{\Gamma})f,g\right\rangle=0.
\]
Thus the localized maps agree separately on the bottom face and on the observed lateral face.

Both $\Gamma_0$ and $\Gamma_\Sigma$ are nonempty relatively open subsets of the smooth part of $\partial\Omega$.
The coefficients are smooth in a common neighborhood of $\overline\Omega$ and satisfy \eqref{eq:convexity}.
Lemma~\ref{lem:boundary-jets} therefore applies with either choice of $W$.
Taking $\eta=0$ in that lemma gives
\begin{equation}\label{eq:separated-traces}
 \begin{aligned}
 \mu_1(y,0)&=\mu_2(y,0),&&y\in G,\\
 \mu_1(y,t)&=\mu_2(y,t),&&y\in\Sigma,\quad 0<t<L.
 \end{aligned}
\end{equation}
The first identity covers every transverse position in $G$.
The second provides the axial dependence at any fixed $y_*\in\Sigma$.
We use these identities to compare the unknown factors or summands.

\subsection{Multiplicative separation}\label{subsec:separation-multiplicative}
We prove the shear modulus assertion in case~(i) of Theorem~\ref{thm:separation}.
The bottom trace determines the transverse factor up to a positive constant.
The lateral trace then fixes the reciprocal scaling of the axial factor.

\begin{proof}[Proof of Theorem~\ref{thm:separation}(i)]
Write $\mu_j(y,t)=\alpha_j(y)\beta_j(t)$ as in Theorem~\ref{thm:separation}(i).
Substituting these representations into the first identity in \eqref{eq:separated-traces} gives
\[
 \alpha_1(y)\beta_1(0)=\alpha_2(y)\beta_2(0),\qquad y\in G.
\]
The numbers $\beta_j(0)$ are positive, so we may set
\[
 c=\frac{\beta_1(0)}{\beta_2(0)}>0.
\]
Dividing the bottom identity by $\beta_2(0)$ gives $\alpha_2(y)=c\alpha_1(y)$ for every $y\in G$.
Since both factors are continuous on $\overline G$, this relation extends to
\begin{equation}\label{eq:separated-alpha}
 \alpha_2=c\alpha_1\quad\hbox{on }\overline G.
\end{equation}
In particular, it holds on the observed arc $\Sigma\subset\partial G$.

Choose $y_*\in\Sigma$, which is possible because $\Sigma$ is nonempty.
The second identity in \eqref{eq:separated-traces}, together with \eqref{eq:separated-alpha}, yields
\[
 \alpha_1(y_*)\beta_1(t)
   =\alpha_2(y_*)\beta_2(t)
   =c\alpha_1(y_*)\beta_2(t),\qquad 0<t<L.
\]
Positivity of $\alpha_1$ on $\overline G$ allows us to cancel $\alpha_1(y_*)$.
We obtain $\beta_2(t)=c^{-1}\beta_1(t)$ on $(0,L)$.
Continuity at both endpoints then gives
\begin{equation}\label{eq:separated-beta}
 \beta_2=c^{-1}\beta_1\quad\hbox{on }[0,L].
\end{equation}
Combining \eqref{eq:separated-alpha} and \eqref{eq:separated-beta}, we conclude that
\begin{equation}\label{eq:separated-product-equality}
 \mu_2(y,t)
   =(c\alpha_1(y))(c^{-1}\beta_1(t))
   =\mu_1(y,t),
 \qquad (y,t)\in\overline G\times[0,L].
\end{equation}
This proves recovery of $\mu$ from the localized data in case~(i).

Equations \eqref{eq:separated-alpha} and \eqref{eq:separated-beta} also prove the multiplicative assertion in Remark~\ref{rem:separation-ambiguity}.
Every admissible representation differs by this constant rescaling, and the rescaling preserves $\mu$.
The normalization $\beta_j(0)=1$ fixes $c=1$ and hence identifies the two factors separately.
No normalization is needed for \eqref{eq:separated-product-equality}.
\end{proof}

\subsection{Additive separation}\label{subsec:separation-additive}
We next prove the shear modulus assertion in case~(ii) of Theorem~\ref{thm:separation}.
The bottom trace determines the transverse summand up to an additive constant.
The lateral trace forces the opposite constant in the axial summand.

\begin{proof}[Proof of Theorem~\ref{thm:separation}(ii)]
Write $\mu_j(y,t)=a_j(y)+b_j(t)$ as in Theorem~\ref{thm:separation}(ii).
The first identity in \eqref{eq:separated-traces} becomes
\[
 a_1(y)+b_1(0)=a_2(y)+b_2(0),\qquad y\in G.
\]
Set
\[
 c=b_1(0)-b_2(0).
\]
Rearranging the bottom identity gives $a_2(y)-a_1(y)=c$ on $G$.
The right-hand side is independent of $y$.
Continuity of $a_1$ and $a_2$ therefore yields
\begin{equation}\label{eq:separated-a}
 a_2(y)-a_1(y)=c,\qquad y\in\overline G.
\end{equation}

Fix $y_*\in\Sigma$.
The second identity in \eqref{eq:separated-traces} reads
\[
 a_1(y_*)+b_1(t)=a_2(y_*)+b_2(t),\qquad 0<t<L.
\]
Using \eqref{eq:separated-a} at the boundary point $y_*$, we obtain
\[
 b_2(t)-b_1(t)=a_1(y_*)-a_2(y_*)=-c,\qquad 0<t<L.
\]
Continuity of the axial summands extends this equality to
\begin{equation}\label{eq:separated-b}
 b_2=b_1-c\quad\hbox{on }[0,L].
\end{equation}
It follows from \eqref{eq:separated-a} and \eqref{eq:separated-b} that
\begin{equation}\label{eq:separated-sum-equality}
 \mu_2(y,t)
   =(a_1(y)+c)+(b_1(t)-c)
   =\mu_1(y,t),
 \qquad (y,t)\in\overline G\times[0,L].
\end{equation}
This proves recovery of $\mu$ from the localized data in case~(ii).

The relations $a_2=a_1+c$ and $b_2=b_1-c$ give the additive assertion in Remark~\ref{rem:separation-ambiguity}.
These shifts preserve $\mu$, and the normalization $b_j(0)=0$ forces $c=0$.
The argument uses only differences of the summands, so it requires no separate positivity of $a_j$ or $b_j$.
\end{proof}

\subsection{Recovery of the first Lam\'e parameter}\label{subsec:separation-lambda}
Sections~\ref{subsec:separation-multiplicative} and~\ref{subsec:separation-additive} establish the conclusion for partial boundary data in Theorem~\ref{thm:separation}.
It remains to prove simultaneous recovery from the full elastic map.
This step is common to both separation classes.

\begin{proof}[Completion of the proof of Theorem~\ref{thm:separation}]
Suppose that
\[
 \Lambda_{\lambda_1,\mu_1}^{\Omega}
 =\Lambda_{\lambda_2,\mu_2}^{\Omega}.
\]
Restricting the full maps to the trace space in Section~\ref{subsec:weak-formulation} gives
\[
 \left\langle(\Lambda_{\lambda_1,\mu_1}^{\Gamma}
             -\Lambda_{\lambda_2,\mu_2}^{\Gamma})f,g\right\rangle
 =\left\langle(\Lambda_{\lambda_1,\mu_1}^{\Omega}
             -\Lambda_{\lambda_2,\mu_2}^{\Omega})f,g\right\rangle=0
\]
for all $f,g\in\mathcal{X}_\Gamma(\partial\Omega)^3$.
The localized maps therefore agree.
In case~(i), Section~\ref{subsec:separation-multiplicative} gives \eqref{eq:separated-product-equality}.
In case~(ii), Section~\ref{subsec:separation-additive} gives \eqref{eq:separated-sum-equality}.
Thus, in either case,
\[
 \mu_1=\mu_2\quad\hbox{on }\overline\Omega.
\]

We may now apply Proposition~\ref{prop:known-shear}.
Indeed, $\Omega=G\times(0,L)$ has the product geometry allowed in that proposition.
Both coefficient pairs are smooth in a common neighborhood of $\overline\Omega$ and satisfy \eqref{eq:convexity} by the hypotheses of Theorem~\ref{thm:separation}.
Their full maps agree by assumption, and their shear moduli agree by the preceding argument.
The treatment of the edges is contained in Lemma~\ref{lem:common-extension}, used in the proof of Proposition~\ref{prop:known-shear}.
That lemma extends the two pairs to a ball while preserving equality of the shear moduli and of the full boundary maps.
Proposition~\ref{prop:known-shear} consequently gives
\[
 \lambda_1=\lambda_2\quad\hbox{on }\overline\Omega.
\]
Together with the equality of the shear moduli, this proves
\[
 (\lambda_1,\mu_1)=(\lambda_2,\mu_2)
 \quad\hbox{on }\overline\Omega.
\]
The separation assumptions were used only to identify $\mu$, so no such condition on $\lambda_j$ enters this last step.
The proof of Theorem~\ref{thm:separation} is complete.
\end{proof}

\section{Directional quasianalyticity}\label{sec:proof-qa}
In this section, we prove Theorem~\ref{thm:qa} by combining boundary determination with quasianalyticity along a fixed direction.
The boundary map determines all derivatives of $\mu$ at the boundary, and quasianalyticity allows these derivatives at an endpoint to determine $\mu$ throughout the corresponding connected line component.
Applying this argument to each component gives uniqueness of $\mu$ throughout the domain without requiring convexity.
The recovery of $\lambda$ then follows from uniqueness for a known shear modulus.
Section~\ref{subsec:qa-lines} proves Theorem~\ref{thm:qa}, and Section~\ref{subsec:qa-curves} extends the argument to prescribed smooth curves and localized data.

\subsection{Quasianalyticity along lines}\label{subsec:qa-lines}

\begin{proof}[Proof of Theorem~\ref{thm:qa}]
Assume that the full elastic Dirichlet-to-Neumann maps agree.
Let $h=\mu_1-\mu_2$.
Equality of the full maps implies equality of their localized restrictions on every open boundary patch.
The domain is smooth, and both coefficient pairs are smooth in the common neighborhood $U$ and satisfy \eqref{eq:convexity}.
Lemma~\ref{lem:boundary-jets} therefore applies at every boundary point and gives
\begin{equation}\label{eq:qa-boundary-vanishing}
 D^\eta h(p)=0\qquad
 (p\in\partial\Omega,\ \eta\in\mathbb{N}_0^3).
\end{equation}
Fix an arbitrary point $x\in\Omega$.
Write $x=y+s_0e$, where $y=x-(x\cdot e)e\in e^\perp$ and $s_0=x\cdot e$.
The set $I_y$ is open in $\mathbb{R}$.
Let $(a,b)$ be its connected component containing $s_0$.
Since $\Omega$ is bounded, both endpoints are finite.

The point $p_a=y+ae$ lies in $\overline\Omega$.
If it belonged to $\Omega$, openness would contradict the maximality of the component $(a,b)$; hence $p_a\in\partial\Omega$.

Choose the interval $J$ and the common class $C^{\{M\}}(J)$ provided by the theorem for this component.
Set
\[
 h_y(s)=\mu_1(y+se)-\mu_2(y+se),\qquad s\in J.
\]
For any compact $K\Subset J$, choose the constants $C_j,R_j>0$ in Definition~\ref{def:quasianalytic} for the two functions $s\mapsto\mu_j(y+se)$.
The common weight $M$ gives
\[
 \sup_{s\in K}|h_y^{(k)}(s)|
 \leq (C_1+C_2)\max\{R_1,R_2\}^{k}k!M_k,
 \qquad k\geq0.
\]
Since $K$ was arbitrary, $h_y\in C^{\{M\}}(J)$.
For each integer $k\geq0$, the chain rule gives
\begin{equation}\label{eq:qa-line-endpoint}
 h_y^{(k)}(a)
 =(e\cdot\nabla)^kh(p_a)
 =\sum_{|\eta|=k}\frac{k!}{\eta!}e^\eta D^\eta h(p_a)=0.
\end{equation}
Here $\eta!=\eta_1!\eta_2!\eta_3!$ and $e^\eta=e_1^{\eta_1}e_2^{\eta_2}e_3^{\eta_3}$.
The last equality follows from \eqref{eq:qa-boundary-vanishing} at $p_a\in\partial\Omega$, and for $k=0$ the expression is simply $h(p_a)$.
Since $h\in C^\infty(U)$, its boundary derivatives are the restrictions of its ambient derivatives.
The point $a$ is an interior point of $J$ because $J$ is open and contains $[a,b]$.
Applying the quasianalytic identity principle \eqref{eq:qa-definition} to \eqref{eq:qa-line-endpoint} gives $h_y=0$ throughout $J$, and in particular on $(a,b)$.
Hence $h(x)=h_y(s_0)=0$.

Since $x$ was arbitrary, $\mu_1=\mu_2$ in $\Omega$, and continuity gives equality on $\overline{\Omega}$.
The argument applies to each connected line component separately.
It uses neither convexity of $\Omega$ nor uniform bounds for the quasianalytic classes on different lines.
The use of ambient directional derivatives also covers tangential endpoints.

Set $\mu=\mu_1=\mu_2$ in $\Omega$.
The original full data equality now reads
\[
 \Lambda_{\lambda_1,\mu}^{\Omega}
 =\Lambda_{\lambda_2,\mu}^{\Omega}.
\]
The original pairs are smooth in $U$ and strongly convex on $\Omega$.
Since $\Omega$ is bounded, connected, and smooth, Proposition~\ref{prop:known-shear} gives $\lambda_1=\lambda_2$ on $\overline\Omega$.
Thus $(\lambda_1,\mu_1)=(\lambda_2,\mu_2)$ on $\overline\Omega$, with quasianalyticity used only for the shear modulus.
\end{proof}

\subsection{Quasianalyticity along curves}\label{subsec:qa-curves}
As noted in Remark~\ref{rem:curve-qa}, the argument of Section~\ref{subsec:qa-lines} extends to prescribed smooth curves and localized boundary data.

\begin{proposition}\label{prop:curve-qa}
Let $\Omega$, $U$, and $(\lambda_j,\mu_j)$, $j=1,2$, satisfy the domain, smoothness, and strong convexity conditions of Theorem~\ref{thm:qa}.
Let $\Gamma\subset\partial\Omega$ be nonempty and relatively open, possibly $\Gamma=\partial\Omega$.
Suppose that $\Omega$ is covered by the images $\gamma((a,b))$ of a family of smooth curves $\gamma:J\to U$, where $a<b$, $J\supset[a,b]$ is a connected open interval, and
\[
 \gamma(a)\in\Gamma,\qquad
 \gamma((a,b))\subset\Omega,\qquad
 \gamma(b)\in\overline\Omega,\qquad
 \gamma'(s)\ne0\quad(a\leq s\leq b).
\]
For each curve, suppose that
\[
 \mu_j\circ\gamma\in C^{\{M\}}(J),\qquad j=1,2,
\]
for a common class as in Definition~\ref{def:quasianalytic}.
The curves, their parametrizations, and the weights are prescribed independently of the candidates.
The intervals and weights may vary between curves, and the defining constants need not be uniform.
Then
\[
 \Lambda_{\lambda_1,\mu_1}^{\Gamma}
 =\Lambda_{\lambda_2,\mu_2}^{\Gamma}
 \quad\Longrightarrow\quad
 \mu_1=\mu_2\quad\hbox{on }\overline\Omega.
\]
If the full elastic Dirichlet-to-Neumann maps agree, then both Lam\'e parameters agree on $\overline\Omega$.
No quasianalyticity is required of $\lambda_j$.
\end{proposition}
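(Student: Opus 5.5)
The proof follows the line argument in the proof of Theorem~\ref{thm:qa}, with straight lines replaced by the prescribed curves and full boundary data replaced by data on $\Gamma$.

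\emph{Step 1: boundary jets on $\Gamma$.} Set $h=\mu_1-\mu_2\in C^\infty(U)$. Since $\Omega$ is $C^\infty$, all of $\partial\Omega$ is smooth, so $\Gamma$ is a nonempty relatively open subset of the smooth part. Lemma~\ref{lem:boundary-jets} with $W=\Gamma$ then gives
\[
 D^\eta h(p)=0,\qquad p\in\Gamma,\ \eta\in\mathbb{N}_0^3 .
\]
The lemma is stated pointwise on $W$, and every point of $\Gamma$ lies in a smaller patch $W'$. Hence the vanishing holds at every $p\in\Gamma$, in particular at each starting point $\gamma(a)$.

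\emph{Step 2: derivatives of $h\circ\gamma$ at $s=a$.} Fix one curve $\gamma$ and set $h_\gamma=h\circ\gamma=\mu_1\circ\gamma-\mu_2\circ\gamma$ on $J$. As in the line case, the common weight $M$ gives $h_\gamma\in C^{\{M\}}(J)$, with constants $C_1+C_2$ and $\max\{R_1,R_2\}$ on each compact set.

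The main point differs from the line case: $(h\circ\gamma)^{(k)}$ is no longer a single directional derivative. By the Fa\`a di Bruno formula,
\[
 (h\circ\gamma)^{(k)}(a)=\sum_{1\le|\eta|\le k} P_{k,\eta}\bigl(\gamma'(a),\dots,\gamma^{(k)}(a)\bigr)\,D^\eta h(\gamma(a)),
\]
with polynomial coefficients $P_{k,\eta}$. Every Cartesian derivative of $h$ appearing here vanishes at $\gamma(a)\in\Gamma$ by Step~1, and the $k=0$ term is $h(\gamma(a))=0$. Thus $h_\gamma^{(k)}(a)=0$ for all $k\ge0$. Because $a$ is interior to the open interval $J\supset[a,b]$, the identity principle \eqref{eq:qa-definition} gives $h_\gamma\equiv0$ on $J$. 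In particular $h=0$ on $\gamma((a,b))$.

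The hypothesis $\gamma'\ne0$ is not needed for this implication. It only guarantees that the curves are genuine immersed paths. Quasianalyticity is applied in the fixed parameter $s$, which is legitimate because the parametrizations are prescribed independently of the candidates.

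\emph{Step 3: conclusion.} The images $\gamma((a,b))$ cover $\Omega$, so $h=0$ in $\Omega$, and continuity gives $\mu_1=\mu_2$ on $\overline\Omega$. For the second assertion, equality of the full maps restricts to equality of the localized maps on $\Gamma$, as in the completion of the proof of Theorem~\ref{thm:separation}. The first part then gives a common shear modulus, and Proposition~\ref{prop:known-shear} yields $\lambda_1=\lambda_2$ on $\overline\Omega$. No property of $\lambda_j$ beyond smoothness and \eqref{eq:convexity} is used.

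The only step needing real care is Step~2: replacing the single directional derivative of the line case by Fa\`a di Bruno's formula. It works because Lemma~\ref{lem:boundary-jets} supplies the vanishing of all Cartesian derivatives, not only normal ones.
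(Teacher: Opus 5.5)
Your proposal is correct and follows the paper's proof essentially step for step: boundary jets on $\Gamma$ from Lemma~\ref{lem:boundary-jets}, the chain rule (Fa\`a di Bruno) at $\gamma(a)$ together with linearity of $C^{\{M\}}(J)$, the identity principle and the covering, and then restriction of the full maps followed by Proposition~\ref{prop:known-shear} for $\lambda$. Your remark that the hypothesis $\gamma'\neq0$ plays no role in the argument is also accurate for the paper's proof.
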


\begin{proof}
Suppose that the localized maps agree and set $h=\mu_1-\mu_2$.
Lemma~\ref{lem:boundary-jets} gives
\begin{equation}\label{eq:qa-local-boundary-vanishing}
 D^\eta h(p)=0\qquad(p\in\Gamma,\ \eta\in\mathbb{N}_0^3).
\end{equation}

Fix $x\in\Omega$ and choose a covering curve $\gamma$ and $s_0\in(a,b)$ such that $x=\gamma(s_0)$.
Set $p=\gamma(a)\in\Gamma$ and $q=h\circ\gamma$.
By hypothesis, the two compositions $\mu_j\circ\gamma$ belong to the same class $C^{\{M\}}(J)$.
The estimate for differences in Section~\ref{subsec:qa-lines} therefore gives $q\in C^{\{M\}}(J)$.
Smoothness of $h$ and $\gamma$ suffices for the following chain rule calculation.
Starting from $q'(s)=\nabla h(\gamma(s))\cdot\gamma'(s)$, repeated differentiation gives
\begin{equation}\label{eq:qa-curve-endpoint}
 q^{(k)}(a)
 =\sum_{1\leq|\eta|\leq k}A_{k,\eta}(a)D^\eta h(p)=0,
 \qquad k\geq1,
\end{equation}
where each coefficient $A_{k,\eta}$ is a polynomial in derivatives of $\gamma$ of order at most $k$.
Equation \eqref{eq:qa-local-boundary-vanishing} makes every factor $D^\eta h(p)$ zero and also gives $q(a)=h(p)=0$.
Together with \eqref{eq:qa-curve-endpoint}, this shows that all derivatives of $q$ vanish at $a$.
Since $a$ is an interior point of the connected interval $J$, the quasianalytic identity principle \eqref{eq:qa-definition} gives $q=0$ on $J$.
In particular, $h(x)=q(s_0)=0$.
As $x$ was arbitrary, continuity yields $\mu_1=\mu_2$ on $\overline\Omega$.

Suppose now that the full elastic maps agree.
By the definition in Section~\ref{subsec:weak-formulation}, for all $f,g\in\mathcal{X}_\Gamma(\partial\Omega)^3$,
\[
 \left\langle(\Lambda_{\lambda_1,\mu_1}^{\Gamma}
             -\Lambda_{\lambda_2,\mu_2}^{\Gamma})f,g\right\rangle
 =\left\langle(\Lambda_{\lambda_1,\mu_1}^{\Omega}
             -\Lambda_{\lambda_2,\mu_2}^{\Omega})f,g\right\rangle=0.
\]
Thus the localized maps agree, and the preceding argument yields $\mu_1=\mu_2$ on $\overline\Omega$.
The domain is bounded, connected, and smooth.
Both coefficient pairs are smooth in the common neighborhood $U$ and satisfy \eqref{eq:convexity}.
Together with equality of the full maps and the common shear modulus, these conditions allow us to apply Proposition~\ref{prop:known-shear}, giving
\[
 \lambda_1=\lambda_2\quad\hbox{on }\overline\Omega.
\]
Hence both Lam\'e parameters agree on $\overline\Omega$, as asserted in Proposition~\ref{prop:curve-qa}.
\end{proof}
 
\subsection*{Acknowledgments}
The work of Y. Jiang was supported by the Hong Kong RGC Project JRFS2627-1S06.
The work of H. Liu is supported by the Hong Kong RGC General Research Funds (projects 11304224, 11311122 and 11303125).
The authors acknowledge the use of AI tools.
All mathematical arguments and proofs in the final manuscript were checked and written by the authors.

\raggedbottom
\bibliographystyle{plain}
\bibliography{references}
\end{document}